\pgfplotsset{compat = newest}
\newcommand*\Let[2]{\State #1 $\coloneqq$ #2}
\algrenewcommand\alglinenumber[1]{
    {\textsf\footnotesize#1}}
\algrenewcommand\algorithmicrequire{\textbf{Precondition:}}
\algrenewcommand\algorithmicensure{\textbf{Postcondition:}}
\DeclareMathOperator{\Tr}{Tr}
\newcommand{\NN}{\mathbb{N}}
\newcommand{\Real}{\mathbb{R}}
\newcommand{\BB}{\mathbb{B}}
\renewcommand{\ss}{\subseteq}
\newcommand{\ord}[1]{[#1]}
\newcommand{\From}[1]{\xleftarrow{#1}}
\newcommand{\To}[1]{\xrightarrow{#1}}
\newcommand{\ol}[1]{\overline{#1}}
\newcommand{\Arr}{\mathsf{Arr}}
\newcommand{\Entr}{\mathsf{Entr}}
\newcommand{\BBox}{\mathsf{BBox}}
\newcommand{\Plot}{\mathrm{Plot}}
\newcommand{\Pixel}{\mathrm{Pixel}}
\newcommand{\Cost}{\mathrm{Cost}^1}
\newcommand{\PCost}{\mathrm{Cost}^\infty}
\newcommand{\Clust}{\mathrm{Clust}}
\newcommand{\Bool}{\BB\mathrm{ool}}
\newcommand{\id}{\mathrm{id}}
\newcommand{\Rel}{\mathsf{Rel}}
\theoremstyle{plain}
\newtheorem{theorem}{Theorem}[section]
\newtheorem*{theorem*}{Theorem}
\newtheorem{proposition}[theorem]{Proposition}
\newtheorem{corollary}[theorem]{Corollary}
\newtheorem*{corollary*}{Corollary}
\newtheorem*{lemma*}{Lemma}
\theoremstyle{definition}
\newtheorem{definition}[theorem]{Definition}
\theoremstyle{remark}
\newtheorem{example}[theorem]{Example}
\newtheorem{remark}[theorem]{Remark}
\tikzset{
	pack/.style = 
		{circle, draw, minimum size = 4 mm, font = \normalsize}, 
	wd/.style = {
		draw, ellipse, minimum width = 100 mm, minimum height = 50 mm, scale = 1.0}, 
	invertwd/.style = 
		{draw, ellipse, minimum width = 50 mm, minimum height = 100 mm, scale = 1.0}, 
	helper/.style = 
		{draw, circle, minimum size = .01 mm, inner sep = 0 pt},
	epack/.style = 
	{draw, ellipse, minimum width = 25 mm, minimum height = 12.5 mm, font = \normalsize}
	}
	\tikzset{
   oriented WD/.style={
      every to/.style={out=0,in=180,draw},
      label/.style={
         font=\everymath\expandafter{\the\everymath\scriptstyle},
         inner sep=0pt,
         node distance=2pt and -2pt},
      semithick,
      node distance=1 and 1,
      decoration={markings, mark=at position .5 with {\arrow{stealth};}},
      ar/.style={postaction={decorate}},
      execute at begin picture={\tikzset{
         x=\bbx, y=\bby,
         every fit/.style={inner xsep=\bbx, inner ysep=\bby}}}
      },
   bbx/.store in=\bbx,
   bbx = 1.5cm,
   bby/.store in=\bby,
   bby = 1.75ex,
   bb port sep/.store in=\bbportsep,
   bb port sep=2,
   bb port length/.store in=\bbportlen,
   bb port length=4pt,
   bb min width/.store in=\bbminwidth,
   bb min width=1cm,
   bb rounded corners/.store in=\bbcorners,
   bb rounded corners=2pt,
   bb small/.style={bb port sep=1, bb port length=2.5pt, bbx=.4cm, bb min width=.4cm, bby=.7ex},
   bb/.code 2 args={
      \pgfmathsetlengthmacro{\bbheight}{\bbportsep * (max(#1,#2)+1) * \bby}
      \pgfkeysalso{draw,minimum height=\bbheight,minimum width=\bbminwidth,outer sep=0pt,
         rounded corners=\bbcorners,thick,
         prefix after command={\pgfextra{\let\fixname\tikzlastnode}},
         append after command={\pgfextra{\draw
            \ifnum #1=0{} \else foreach \i in {1,...,#1} {
               ($(\fixname.north west)!{\i/(#1+1)}!(\fixname.south west)$) +(-\bbportlen,0) coordinate (\fixname_in\i) -- +(\bbportlen,0) coordinate (\fixname_in\i')}\fi 
            \ifnum #2=0{} \else foreach \i in {1,...,#2} {
               ($(\fixname.north east)!{\i/(#2+1)}!(\fixname.south east)$) +(-\bbportlen,0) coordinate (\fixname_out\i') -- +(\bbportlen,0) coordinate (\fixname_out\i)}\fi;
         }}}
   },
   bb name/.style={append after command={\pgfextra{\node[anchor=north] at (\fixname.north) {#1};}}}
}
\setheadfoot{\onelineskip}{2\onelineskip} 
\def\blfootnote{\gdef\@thefnmark{}\@footnotetext}
\title{Pixel Arrays: A fast and elementary method\\for solving nonlinear systems}
\author{David I. Spivak%
\thanks{Spivak was supported by AFOSR grants FA9550--14--1--0031 and FA9550-17-1-0058, and NASA grant NNH13ZEA001N-SSAT.}
\;\; Magdalen R. C. Dobson\\ Sapna Kumari\;\; Lawrence Wu
}
\date{\vspace{-.25in}}
\begin{document}
\firmlists*
	
\maketitle

\pagestyle{companion}

\begin{abstract}\label{abstract}
We present a new method, called the pixel array method, for approximating all solutions in a bounding box for an arbitrary nonlinear system of relations. In contrast with other solvers, our approach requires that the user must specify which variables are to be exposed, and which are to be left latent. The entire solution set is then obtained---in terms of these exposed variables---by performing a series of array multiplications on the $n_i$-dimensional plots of the individual relations $R_i$. This procedure introduces no false negatives and is much faster than Newton-based solvers. The key is the unexposed variables, which Newton methods can make no use of. In fact, we found that with even a single unexposed variable our method was more than 10x faster than Julia's NLsolve. Due to its relative simplicity, the pixel array method is also applicable to a broader class of systems than Newton-based solvers are. The purpose of this article is to give an account of this new method.\\

\noindent \textbf{Keywords:} solving nonlinear systems, numerical methods, fast algorithms, category theory, array multiplication.
\end{abstract}

\chapter{Introduction} \label{ch:introduction}

The need to compute solutions to systems of equations or inequalities is ubiquitous throughout mathematics, science, and engineering. A great deal of work is continually spent on improving the efficiency of linear systems solvers \autocite{Coppersmith.Winograd:1987a,Fatahalian.Sugerman.Hanrahan:2004a,Goto.Geijn:2008a,Dalton.Olson.Bell:2015a} and algebro-geometric approaches have also been developed for solving systems of polynomial equations \autocite{Grigorev.Vorobjov:1988a,Canny.Kaltofen.Yagati:1989a,Sturmfels:2002a}. Less is known for systems of arbitrary continuous functions, and still less for systems involving inequalities or other relations \autocite{Broyden:1965a,Martinez2000}. Techniques for solving nonlinear systems are often highly technical and specific to the particular types of equations being solved. According to \cite{Martinez2000}, "all practical algorithms for solving [nonlinear systems] are iterative"; they are designed to find one solution near a good initial guess. 

We present a new method with which to find an approximation to the entire solution set---in a bounding box, and for a user-specified subset of "exposed" variables---of an arbitrary system of relations. This approach has the following features:
\begin{compactitem}
	\item it returns \emph{all solutions in a given bounding box};
	\item it introduces \emph{no false negatives};
	\item it works for \emph{non-differentiable} or even \emph{discontinuous} functions;
	\item it is \emph{not iterative} and requires no initial guess, in contrast with quasi-Newton methods;
	\item it is \emph{elementary} in the sense that it relies only on generalized matrix arithmetic; and
	\item it is \emph{faster} than quasi-Newton methods for this application.
\end{compactitem}
The "application" to which we refer just above is that in which the user only requires solution values for certain variables. The user chooses any subset of variables, and solutions to the system will be reported only in terms of the chosen or "exposed" variables. Solution values of the other "unexposed" variables cannot be recovered, except by exposing them in the next run of the  algorithm. The existence of unexposed variables is key to the speed of our method and therefore to its advantage over quasi-Newton methods. We found that having even a single unexposed variable can amount to a 10x speedup over Julia's NLsolve; see Section~\ref{ch:Results}.

We call our technique the \emph{pixel array} (PA) method. While it has many advantages, the PA method also has limitations. One such limitation, as discussed above, is that the solution values for unexposed variables are lost.

 
\section{A simple example}\label{sec: Motivating Example}
Here we give an overly-simplified example to fix ideas.

Suppose we plot the two equations $x^2=w$ and $w=1-y^2$ as graphs on a computer screen. The result for each equation, say the first one above, will be a multidimensional array of pixels---some on and some off---that represents the set of $(x,w)$-points which satisfy the equation. Thus we can plot each equation as a \emph{matrix of booleans}---True's and False's---representing its graph; say $M$ for the first equation and $N$ for the second. What happens if we multiply the two matrices together? It turns out that the resulting matrix $MN$ represents the set of $(x,y)$-pairs for which there exists a $w$ simultaneously satisfying both equations in the system. In other words, ordinary matrix multiplication returns the plot of a circle, $x^2+y^2=1$; see Figure~\ref{fig: motivating example} and sample code in Appendix~\ref{sec:sample_code}.
\begin{figure}
	\includegraphics[width=\textwidth]{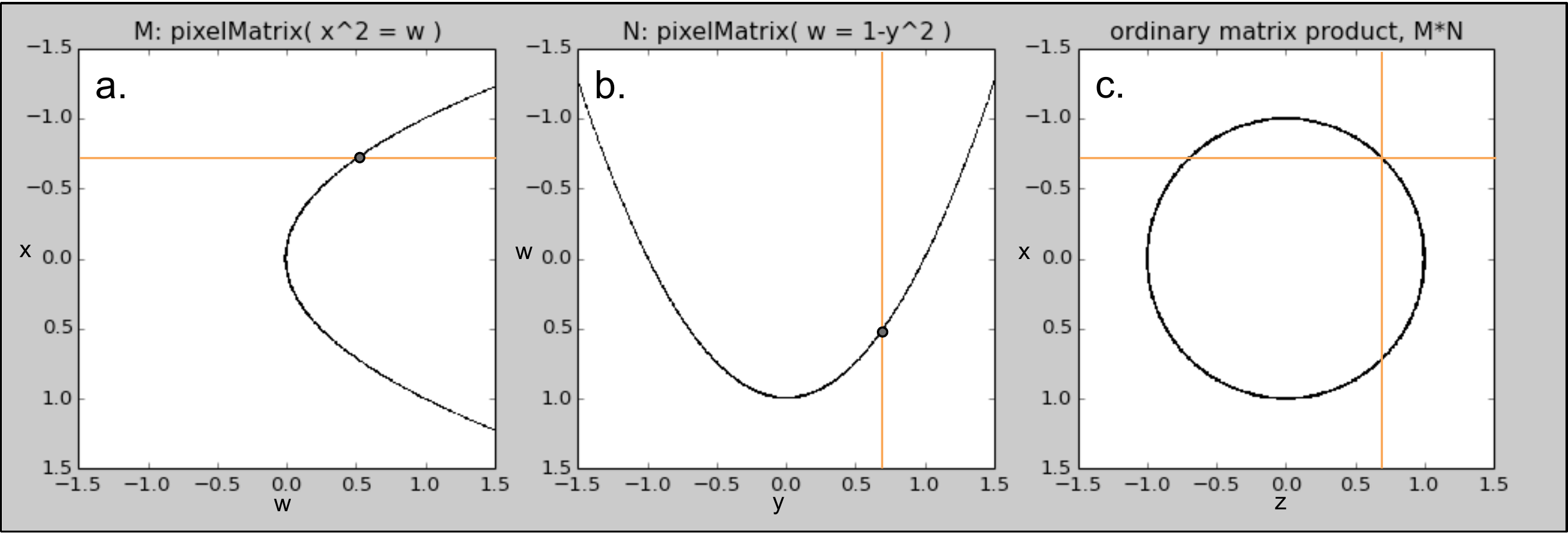}
	\caption{Parts a and b show plots of $x^2=w$ and $w=1-y^2$, as $100\times 100$ pixel matrices, where each entry is a 0 or 1, shown as white and black points, respectively. The graphs appear rotated $90^\circ$ clockwise, in order to agree with matrix-style indexing, where the first coordinate is indexed downward and the second coordinate is indexed rightward. The matrices from parts a and b are multiplied, and the result is shown in part c. The horizontal and vertical lines in parts a and b respectively indicate a sample row and column whose dot product is 1, hence the pixel is on at their intersection point in part c. The fact that the matrix product looks like a circle is not a coincidence; it is the graph of the simultaneous solution to the system given in parts a and b, which in this case can be rewritten to a single equation $x^2=1-y^2$.}
	\label{fig: motivating example}
\end{figure}

A couple of observations are in order for the simple system above. First, one can see immediately by looking at Figure~\ref{fig: motivating example} that we are not in any way intersecting two plots; matrix multiplication is a very different operation. Second, despite the simplicity of the system given here---one may simply eliminate the variable $w$ directly\footnote{It turns out that even though one may simply eliminate $w$ and plot $x^2+y^2=1$ directly, it is in fact faster to plot the functions separately and combine the results using matrix multiplication, basically because plotting functions is faster than plotting relations.
}---the PA method works in full generality, as we will see in Section~\ref{sec:math_foundations}. 

Above we combined two relations using matrix multiplication $MN$, but for systems with multiple relations we need to use a more general array multiplication formula. The reason for this is that each relation can involve more than two variables, and there are many different ways that these variables may be shared between the relations. In fact, there is one array multiplication algorithm that simultaneously generalizes matrix multiplication, trace, and Kronecker (tensor) product; we will explain it in Section~\ref{sec:GAM}.

\paragraph{Our implementation} The plots and time-estimates presented in this paper were obtained using an implementation of the pixel array method in Julia \cite{Bezanson}, an easy-to-learn technical computing language with powerful array handling capabilities.

\section{Plan of the paper}
We begin in Section~\ref{sec:inputs_outputs_artifacts} by describing what the pixel array (PA) method does---i.e.\ what its inputs and outputs are---as well as briefly describing the mathematical by-products that are used along the way and then giving some examples. In Section~\ref{sec:math_foundations} we give details about the mathematical underpinnings of the PA method, including how to plot equations and combine them using the general array multiplication algorithm. We also discuss error bounds for this method and explain how a generalized associative law allows us to group or "cluster" our array multiplication process for efficiency.  In Chapter~\ref{ch:Results} we discuss our attempts to benchmark the PA method. 

\chapter{Pixel array method: its inputs, outputs, and intermediate structures}\label{sec:inputs_outputs_artifacts}

In this section we describe what the pixel array (PA) method does in terms of: what the user supplies as input, what intermediate structures are then created, and finally what is returned to the user as output. We then give some examples.

\section{Input to the pixel array method}\label{sec:input}

The input to the PA method is a set of relations (equations, inequalities, etc.), a discretization---i.e.\ a range and a resolution---for each variable, and a set of variables to solve for. That is, certain variables are considered \emph{unexposed} whereas others are  \emph{exposed}. Here is an exemplar of what can be input to the PA method:
\begin{alignat}{3}
&\text{Solve relations:}&&\qquad  x^2 + 3 |x-y|- 5 = 0\tag{$R_1$}\\
&							&&\qquad y^2v^3 - w^5 \le 0\tag{$R_2$}\\
&							&&\qquad \cos(u+zx) - w^2 = 0 \tag{$R_3$}\\\nonumber
&\text{Range and resolution:}&&\qquad u,v,x,y\in[-2,2)@50;\quad w,z\in[-1,1]@80\\\nonumber
&\text{Expose variables:}&&\qquad(v,z)
\end{alignat}
The first thing to extract from this setup is how variables are shared between relations. Relation $R_1$ has variables $x,y$; relation $R_2$ has variables $v,w,y$; relation $R_3$ has variables $u,w,x,z$; and variables $v,z$ are to be exposed. These facts together constitute the wiring diagram, which we discuss in Section~\ref{sec:intermediate}.

\section{Intermediate structures produced by the pixel array method}\label{sec:intermediate}
Running the PA method uses several sorts of mathematical structures in order to solve the system. The most important of these is probably what we call a \emph{wiring diagram}, which describes how the variables are shared between relations, as well as what variables are to be exposed. 

\subsection{Wiring diagram}
For the system of three equations shown above in Section~\ref{sec:input}, the wiring diagram $\Phi$ would look like this:
\begin{equation}\label{ex:wd}
\begin{tikzpicture}[baseline=(wd)]
	
	\node[pack] (P1) {$P_1$};
	\node[pack, below left=.5 and 1 of P1] (P2) {$P_2$};
	\node[pack, below right=.5 and 1 of P1] (P3) {$P_3$};
	
	\node[helper,inner sep = .8 pt,fill=black,below left=.1 and .1 of P3] (H3) {};
	\node[wd, scale = .53, fit={($(P1.north)+(0,-5pt)$) (P2) (P3) (H3)}] (wd) {}; 

	\draw (P2) --+(-1.2,0);
	\draw (P1) to (P2);
	\draw (P2) to (P3);
	\draw (P3) -- +(1.2,0);
	\draw (P1) to (P3);
	\draw (P3) to (H3);
	
	\node at ($(P1)!.5!(P2) + (0,5pt)$) {$y$};
	\node at ($(P1)!.5!(P3) + (0,6pt)$) {$x$};
	\node at ($(P2)!.5!(P3) + (0,5pt)$) {$w$};

	\node at ($(P2)+(-.7,5pt)$) {$v$};
	\node at ($(P3)+(.7,5pt)$) {$z$};
	\node at ($(H3)+(-5pt,0)$) {$u$};
	
	\node at ($(wd.south)+(0,.3)$) {$P'$};
	\node[above=0 of wd] {$\Phi\colon P_1,P_2,P_3\to P'$};
\end{tikzpicture}
\end{equation}
We will explain how to represent a diagram like \eqref{ex:wd} set-theoretically in Section~\ref{sec:Packs_and_WDs}.

A wiring diagram $\Phi$ consists of a single outer circle and several inner circles. Each circle has a number of ports, and each port refers to a single variable of the system. Each variable has a \emph{discretization}, namely its range and resolution $r\geq 2$, and the resolution is attached to the port. One can see in \eqref{ex:wd} that $P_1$ has ports $x$ and $y$, and it was specified above that the resolution of $y$ is $50$. We call a circle with its collection of ports a \emph{pack}. For each variable in the system there is a \emph{link} which connects all ports referring to that variable. In \eqref{ex:wd} all links other than $u$ connect two ports, e.g.\ the link for $y$ connects a port of $P_1$ to a port of $P_2$.

Thus the wiring diagram pulls all this information together: it consists of several inner packs wired together inside an outer pack. The wiring diagram \eqref{ex:wd} was denoted $\Phi\colon P_1,P_2,P_3\to P'$ because it includes three inner packs $P_1,P_2,P_3$ and one outer pack $P'$. Thus the first intermediate structure created by the pixel array method is a wiring diagram; see Definition~\ref{def:wd}.

A wiring diagram in turn specifies an \emph{array multiplication formula}, which is roughly a generalization of matrix multiplication to arrays of arbitrary dimension. If given a plot of each relation $R_i$ in the system, the array multiplication formula specifies how these plots should be combined to produce a plot of the solution to the whole system. Before discussing this further, we must explain what plots are in a bit more detail.

\subsection{Plots}
The plot of a relation $R$ is a boolean array that discretizes $R$ with the specified range and resolution. For example pack $P_2$ includes ports $v,w,y$ with resolutions $50,80,50$; thus it will be inhabited by a $3$-dimensional array of size $50\times80\times 50$. The entries of this array may be called \emph{pixels}; each pixel refers to a sub-cube of the range $[-1,1]\times[-2,2]\times[-1,1]$. The pixels are adjacent sub-cubes of size $\frac{2}{50}\times\frac{4}{80}\times\frac{2}{50}$ in this case. The value of each pixel is boolean---either on or off, $1$ or $0$, True or False---depending on whether the relation holds somewhere in that sub-cube, or not.%
\footnote{Above we define the entries in our arrays to be booleans, but in fact one can use values in any semi-ring, such as $\NN$ or $\Real_{\geq0}$, and everything in this article will work analogously. Rather than indicating \emph{existence} of solutions in each pixel, other semi-rings allow us to indicate \emph{densities} of solutions in each pixel.}
We denote the plot of relation $R$ inside pack $P$ by $\Plot_P(R)$.

The initial plots may be obtained by evaluation at each pixel.
These plots may be considered the second sort of intermediate structure produced by the PA method.

\subsection{Array multiplication formula}
We now return to our brief description of the array multiplication formula, which is derived immediately from the wiring diagram. The basic idea is that whereas a matrix has two dimensions---rows and columns---an array can have $n$ dimensions for any $n\in\NN$. Thus, whereas a matrix can be multiplied on the left or right, arrays can be multiplied in more ways. There is a more elaborate sense in which array multiplication is associative, as we describe in Section~\ref{sec:GAM}. 

Recall that a pack $P$ includes a number of ports, each labeled by its range and resolution. For each port $p\in P$, we denote its resolution by $r(p)$; in fact, we assume that $r(p)\geq 2$ because otherwise that port adds no information. The set of resolutions for $P$ determines the size---the set of dimensions---of an array; let $\Arr(P)$ denote the set of all arrays having this size and boolean entries. Thus $\Arr(P_3)$ is the set of all possible $50\times80\times80$ arrays, and the plot of $R_3$ is one of them, which we denote $p_3\coloneqq\Plot_{P_3}(R_3)\in\Arr(P_3)$.

The wiring diagram $\Phi\colon P_1,P_2,P_3\to P'$ specifies an array multiplication formula, which is a function 
\[\Arr(\Phi)\colon\Arr(P_1)\times\Arr(P_2)\times\Arr(P_3)\to\Arr(P').\]
Thus, given a plots $p_1,p_2,p_3$, the array multiplication formula for wiring diagram $\Phi$ returns an output plot $\Arr(\Phi)(p_1,p_2,p_3)\in\Arr(P')$.

\subsection{Clustering}
The array multiplication formula allows one to patch together these individual plots into a solution to the entire system. This can be done all at once, for example given matrices $A,B,C$, generalized array multiplication allows one to multiply them all at once $ABC$ rather than associatively as $A(BC)$ or $(AB)C$. 

It turns out that multiplying many arrays together all at once, at least using the naive algorithm, is often inefficient. However, the associativity of array multiplication (see Section~\ref{sec:GAM}) allows us to solve the system more efficiently by clustering the wiring diagram. The most efficient cluster tree may be difficult to ascertain, but even modest clustering is useful. We will discuss clustering in more detail in Section~\ref{sec:clustering}. A choice of cluster tree for the wiring diagram, and hence strategy for multiplying the arrays, is the last intermediate structure required by the pixel array method.

\section{Output of the pixel array method}
Once the choice of clustering has been made, the plots are combined accordingly. Regardless of the clustering, the output is a plot whose dimensions are given by the discretizations of the exposed variables. 

In the particular example above, the exposed variables are the ports of pack $P'$, so the output will be an element of $\Arr(P')$, i.e.\ a $50\times80$ matrix of booleans. Its entries correspond to those $(v,z)$-pairs for which a simultaneous solution to $R_1,R_2,R_3$ exists. Above, we denoted this 2-dimensional array by $\Arr(\Phi)(p_1,p_2,p_3)$. 

\section{A couple more examples}\label{sec: examples from implementation}
In this section we give two examples from our Julia implementation which illustrate some features of the pixel array method.

\begin{example}\label{ex: butterfly}
Suppose one is asked for all $(w,z)$ pairs for which the following system of equations has a solution:%
\footnote{One may wonder whether the $10^{-5}z$, the $10^{-3}x$, etc.\ in fact make a perceptible difference in the resulting plot. We leave it to the reader to experiment for him- or herself.}
\begin{alignat}{3}
&\text{Solve relations:}&&\qquad  \cos\left(\ln(z^2 + 10^{-3}x)\right) - x +10^{-5}z^{-1} = 0\tag{Equation 1}\\
&							&&\qquad \cosh(w+10^{-3}y) + y + 10^{-4}w = 2\tag{Equation 2}\\
&							&&\qquad \tan(x+y)(x-2)^{-1}(x+3)^{-1}y^{-2}=1 \tag{Equation 3}\\\nonumber
&\text{Range and resolution:}&&\qquad w,x,y,z\in[-3,3)@125\\\nonumber
&\text{Expose variables:}&&\qquad(w,z)
\end{alignat}
The answer can be obtained by matrix multiplication; see the graph labeled 'Result' in Figure~\ref{fig:butterfly}.
Note that there are points at which Equation 3 is undefined; at such points our plotting function simply refrains from turning on the corresponding pixels, and the array multiplication algorithm proceeds as usual.
\begin{figure}
\includegraphics[width=\textwidth]{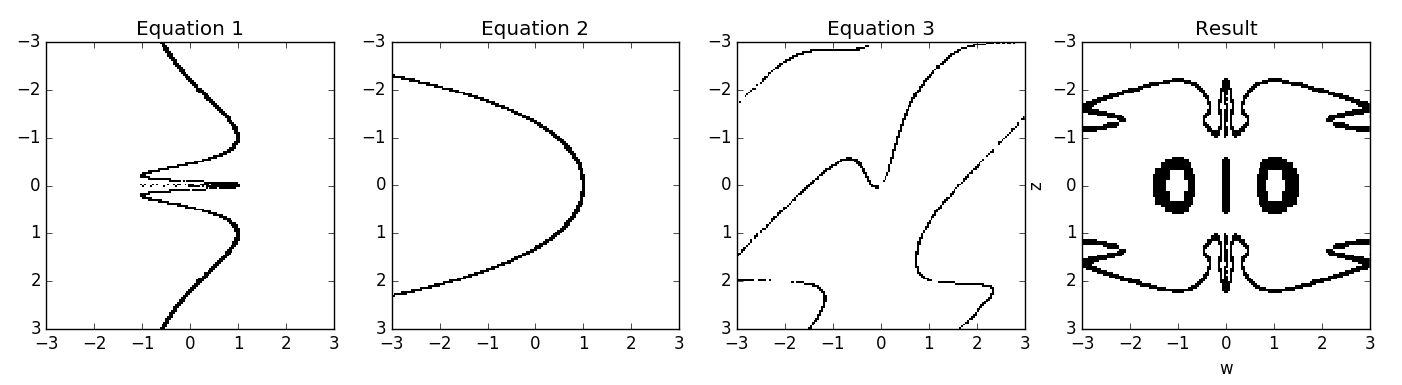}
\caption{Visualizing the entire solution set on exposed variables can reveal patterns.}
\label{fig:butterfly}
\end{figure}

\end{example}

\begin{example}\label{ex:3d_arrays}
Suppose given the input to the left in Figure~\ref{fig:regularity}. Each input plot is 3-dimensional, so we do not show them here. The result of the system is shown to the right.
\begin{figure}[h]
\begin{center}
\parbox{1.2in}{
\begin{alignat*}{3}
&\text{Solve relations:}&&\qquad  \tan(y+w)+\exp(x)=2\\
&						&&\qquad x^3+\cos(\ln(y^2)) = 1.5v\\
&							&&\qquad w+z+10^{-1}v = 0.5\\
&\text{Range and resolution:}&&\qquad v,x,z\in[-3,3)@75\\
&						&&\qquad w,y\in[-2.5,2.5]@75\\
&\text{Expose variables:}&&\qquad(w,y)
\end{alignat*}
}
\hspace{.5in}
\parbox{1.8in}{
\includegraphics[width= 1.8in]{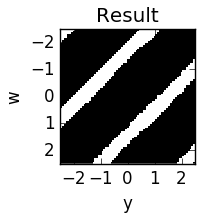}
}
\end{center}
\caption{Regularity in a more complex system.}
\label{fig:regularity} 
\end{figure}
The fact that the result is "solid"---i.e.\ 2-dimensional---is not surprising, because the system includes three equations and five unknowns. 
\end{example}

\chapter{Mathematical foundations}\label{sec:math_foundations}

In this section we explain the mathematical underpinnings of the pixel array method.

\addtocounter{section}{-1}
\section{Notation}\label{notation}
For a finite set $S$, we write $\#S\in\NN$ for its cardinality, and for a number $n\in\NN$, we will use brackets to denote the finite set
\[\ord{n}\coloneqq\{1,\ldots,n\}.\] 
For sets $A$ and $B$, we denote their Cartesian product by $A\times B$, and a multi-fold Cartesian product is denoted using the symbol $\prod$. We denote the disjoint union of sets by $A\sqcup B$. We denote functions from $A$ to $B$ by $A\to B$, injective functions by $A\hookrightarrow B$, and surjective functions by $A\twoheadrightarrow B$.

Throughout this article, we consider arrays whose values are booleans---elements of $\Bool=\{0,1\}$---which form a semiring, in the sense that there is a notion of $0,1,+,*$ and these are associative, distributive, unital, etc. For $\Bool$ the operations $+$ and $*$ are given by OR and AND ($\vee$ and $\wedge$). Moreover, $\Bool$ is a partially ordered semiring, meaning it has an ordering ($0\leq 1$) that are appropriately preserved by $+$ and $*$; see \cite{Golan2003}. In fact, all the ideas in this article work when $\Bool$ is replaced by an arbitrary partially ordered semiring $\BB$. We will generally suppress this fact to keep the exposition readable by a broader audience.

\section{Packs and wiring diagrams}\label{sec:Packs_and_WDs}

A wiring diagram can be visualized as a bunch of inner packs---circles with ports---wired together inside an outer pack. Each port is labeled by its range $[a,b)$ and resolution $r\geq 2$, and two ports can be wired together only if they have the same resolution.

\begin{definition}\label{def:pack}
A \emph{pack} is a tuple $(P,a,b,r)$, where $P$ is a finite set, $a,b\colon P\to\Real$ are functions with $a(p)\leq b(p)$ for all $p\in P$, and $r\colon P\to\NN_{\geq 2}$ is a function. Each element $p\in P$ is called a \emph{port}, $a(p)$ and $b(p)$ are called its \emph{bounds} and the half-open interval $[a(p),b(p))\ss\Real$ is called its \emph{range}, and $r(p)\geq 2$ is called its \emph{resolution}. 

We define the set of \emph{entries in $P$} to be the following product of finite sets:
\begin{equation}\label{eqn:def_Entr}
\Entr(P)\coloneqq\prod_{p\in P}\ord{r(p)}.
\end{equation}
And we define the \emph{bounding box for $P$} to be the product of closed intervals
\begin{equation}\label{eqn:def_BBox}
\BBox(P)\coloneqq\prod_{p\in P}[a_p,b_p)\ss\Real^{\#P}.
\end{equation}
We sometimes denote a pack $(P,a,b,r)$ simply by $P$, for typographical reasons. 
\end{definition}

Let $P_1,\ldots,P_n,P'$ be packs, and assume they are disjoint sets for convenience. A wiring diagram with inner packs $P_1,\ldots,P_n$ and outer pack $P'$, denoted $\Phi\colon P_1,\ldots,P_n\to P'$, can be defined as a surjective function $\varphi$ from the (disjoint) union $P_1\sqcup\cdots\sqcup P_n$ of all the inner ports onto a set $\Lambda$ of \emph{links}, and a function associating a link to each outer port. If two ports map to the same link, we say that they are \emph{linked}. We require that if two ports are linked then they must have the same bounds and resolution. We can package all this as follows.

\begin{definition}\label{def:wd}
Let $P_1,\ldots,P_n,P'$ be packs, let $\ol{P}\coloneqq P_1\sqcup\cdots\sqcup P_n\sqcup P'$ be their disjoint union with induced bounds $a,b\colon\ol{P}\to\Real$ and resolution function $r\colon\ol{P}\to\NN_{\geq2}$. A \emph{wiring diagram} $\Phi\colon P_1,\ldots,P_n\to P'$ consists of a tuple $\Phi=(\Lambda,\varphi,a_\Lambda,b_\Lambda,r_\Lambda)$ where 
\begin{compactitem}
  \item $\Lambda$ is a finite set, 
  \item $\varphi\colon\ol{P}\twoheadrightarrow \Lambda$ is a function such that the restriction $P_1\sqcup\cdots\sqcup P_n\to\Lambda$ is surjective,
  \item $a_\Lambda,b_\Lambda\colon\Lambda\to\Real$ are functions such that $a_\Lambda\circ\varphi=a$ and $b_\Lambda\circ\varphi=b$, and
  \item $r_\Lambda\colon\Lambda\to\NN_{\geq2}$ is a function such that $r_\Lambda\circ\varphi=r$.
\end{compactitem}
Note that $\varphi$ being surjective implies that if $a_\Lambda$ exists, it is necessarily unique; similarly with $b_\Lambda$ and $r_\Lambda$.
\end{definition}

\begin{example}\label{ex:wiring_diagram}
Consider the wiring diagram $\Phi\colon P_1,P_2,P_3\to P'$ shown below:
\begin{equation}\label{ex:threefold}
\begin{tikzpicture}[baseline=(outer.center)]
	\node[wd, scale = .65, minimum width = 90 mm] at (0,0) (outer) {};
	\node[above=.3cm of outer.south] {$P'$};
	
	\node[pack] at (-1.4,    0) (P1) {$P_1$};
	\node[pack] at ( 1.3,  0.8) (P2) {$P_2$};
	\node[pack] at ( 1.3, -0.8) (P3) {$P_3$};
	
	\node[helper, inner sep = 1 pt, fill = black] at (   0,    0) (h1) {};
	\node[helper] at (-3.3,    0) (h2) {};
	\node[helper] at ( 3.1, -0.8) (h3) {};
	\node[helper] at (3.1, .8) (h5) {};
	
	\node[helper, inner sep = 1 pt, fill = black] at (0.3, 0.8) (h4) {};
	\draw (P1) to (h1);
	\draw (P2) to (h1);
	\draw (P3) to (h1);
	\draw (P1) to (h2);
	\draw (P3) to (h3);
	\draw (P2) to (h4);
	\draw (P2) to (h5);
	\draw (P3) to (P2);

	\node at (-2.4,  0.2) {$x$};
	\node at (-0.4,  0.2) {$y$};
	\node at ( 2.3, -0.6) {$z$};
	\node at ( 0.5,  1.0) {$w$};
	\node at ( 2.3, .6) {$v$};
	\node at (1.6,0) {$u$};
\end{tikzpicture}
\end{equation}
Let's assume every port $p$ has bounds $[a(p),b(p)]=[-1,1]$ and resolution $r(p)=20$. In the diagram, we see that pack $P_1$ has two ports $(x_1,y_1)$, pack $P_2$ has ports $(u_2,v_2,w_2,y_2)$, pack $P_3$ has ports $(u_3,y_3,z_3)$, and the outer pack $P'$ has ports $(x',v',z')$. Subscripts and primes' were added to make the sets disjoint. The wiring diagram includes six links $\Lambda=\{\ell_u,\ell_v,\ell_w,\ell_x,\ell_y,\ell_z\}$. The linking function $\varphi$ is given by
\[
u_2,u_3\mapsto\ell_u,\;\;\;
v_2,v'\mapsto\ell_v,\;\;\;
w_1\mapsto\ell_w,\;\;\;
x_1,x'\mapsto\ell_x,\;\;\;
y_1,y_2,y_3\mapsto\ell_y,\;\;\;
z_3,z'\mapsto\ell_z.
\]
The only possible $a_\Lambda,b_\Lambda,r_\Lambda$ are given by $a_\Lambda(\ell)=-1, b_\Lambda(\ell)=1, r_\Lambda(\ell)=20$ for all $\ell\in\Lambda$.
\end{example}

\begin{remark}
For any wiring diagram $\Phi$, the tuple $(\Lambda,a_\Lambda,b_\Lambda,r_\Lambda)$ of links and their bounds and resolutions in fact has the structure of a pack. It does not appear to be intuitively helpful to imagine the set of links as a pack; however, definitions like \eqref{eqn:def_Entr}, \eqref{eqn:def_BBox}, and Definition~\ref{def:entries} do apply.
\end{remark}

\begin{definition}\label{def:entries}
    Let $P=\{p_1,\ldots,p_n\}$ be a pack with resolution $r\colon P\to\NN_{\geq2}$, and let $\Entr(P)=\prod_{p \in P} [r(p)]$ be its set of entries as in \eqref{eqn:def_Entr}. A size-$P$ boolean array is a function $A\colon\Entr(P)\to\{0,1\}$. Given an array $A\in\Arr(P)$ and an entry $e\in\Entr(P)$, we refer to $A(e)\in\{0,1\}$ as the \emph{value of $A$ at $e$}.

We define $\Arr(P)$ to be the set of size-$P$ boolean arrays.%
\footnote{
Note that if $n=0$ then $\Entr(P)$ is an empty product, so a $0$-dimensional array consists of a single boolean entry, $\Arr(\emptyset)=\{0,1\}$. Also note that $\Arr(P)$ can be given the structure of a module over the semiring $\Bool$: arrays can be scaled or added. However, we continue to speak of $\Arr(P)$ as a set.} 
\end{definition}

\begin{example}
In Example~\ref{ex:wiring_diagram}, pack $P_3=\{u_3,y_3,z_3\}$, each with resolution $20$. Then $\Entr(P_3)$ has $20^3=8000$ elements, and $\Arr(P_3)$ is the set of $3$-dimensional arrays of size $20\times 20\times 20$.
\end{example}

Recall that a wiring diagram $\Phi\colon P_1,\ldots,P_n\to P'$, includes a function $\varphi\colon\ol{P}=P_1\sqcup\cdots\sqcup P_n\sqcup P'\to\Lambda$. Composing with the inclusion $P_i\to\ol{P}$, for any $1\leq i\leq n$, returns a function $\varphi_i\colon P_i\to\Lambda$ that preserves the resolutions. These functions allow us to project any array whose dimension is specified by $\Lambda$ to an array whose dimension is specified by $P_i$: every entry for $\Lambda$ projects to an entry for $P_i$. Thus we naturally obtain functions between sets of entries
\begin{equation}\label{eqn:Entr_map}
\Entr_\Phi^i\colon\Entr(\Lambda)\to\Entr(P_i)
\qquad\text{and}\qquad
\Entr_\Phi'\colon\Entr(\Lambda)\to\Entr(P').%
\footnote{
It may be helpful to visualize the sets of entries and the functions provided by $\Phi$ as follows:
\[
\begin{tikzcd}[row sep=-3pt,column sep=large,ampersand replacement=\&]
	\Entr(P_1)
	\\[-6pt]
	\vdots\&\Entr(\Lambda)\ar[lu,"\Entr_\Phi^1"']\ar[ld,"\Entr_\Phi^n"]\ar[r,"\Entr_\Phi'"]\&\Entr(P')
	\\
	\Entr(P_n)
\end{tikzcd}
\]
}
\end{equation}
We similarly obtain continuous functions between the bounding boxes:
\begin{equation}\label{eqn:BBox_map}
\BBox_\Phi^i\colon\BBox(\Lambda)\to\BBox(P_i)
\qquad\text{and}\qquad
\BBox_\Phi'\colon\BBox(\Lambda)\to\BBox(P')
\end{equation}
The surjectivity requirement in Definition~\ref{def:wd} implies that the induced maps to the product are injective, so we have:
\begin{equation}\label{eqn:injectivity}
\Entr(\Lambda)\hookrightarrow\prod_i^n\Entr(P_i)
\qquad\text{and}\qquad
\BBox(\Lambda)\hookrightarrow\prod_i^n\BBox(P_i).
\end{equation}

\begin{example}
    In Example~\ref{ex:wiring_diagram}, the resolution for each port (and hence link) was assigned to be 20; there were 6 links, so $\Entr(\Lambda)=[20]^6$. Applied to an entry $(c_u,c_v,c_w,c_x,c_y,c_z)\in\Entr(\Lambda)$, the various functions $\Entr_\Phi$ in \eqref{eqn:Entr_map} return the entries
\[
(c_w,c_y)\in\Entr(P_1),\;\; (c_v,c_x,c_y)\in\Entr(P_2),\;\; (c_u,c_w,c_x,c_z)\in\Entr(P_3),\;\; (c_v,c_z)\in\Entr(P').
\]
\end{example}

We will discuss generalized array multiplication in Section~\ref{sec:GAM}. We conclude this subsection with some remarks about other mathematical ways to think about wiring diagrams.
 
\begin{remark}\label{rem:pointed_hypergraph}

Every wiring diagram $\Phi\colon P_1,\ldots,P_n\to P'$ has an underlying hypergraph $H(\Phi)$ where vertices are packs $V=\{P_1,\ldots,P_n,P'\}$ and where edges (hyperedges) are the links $\Lambda$. We need to be a bit careful about what we mean by hypergraph, however. First we need to label each edge by a resolution $r\geq 2$, so $H$ is a \emph{weighted hypergraph}. Second, a wiring diagram contains a chosen vertex, namely its outer pack; so $H$ is also a \emph{pointed hypergraph}. Third, there can be nontrivial "loops" in the sense that an edge can link to the same vertex multiple times, and similarly two different edges can link the same set of vertices; so $H$ is a \emph{multi-hypergraph}. All together $\Phi$ has the structure of a weighted pointed multi-hypergraph; it is given by three functions
\[V\From{\pi}\ol{P}\To{\varphi}\Lambda\To{r}\NN_{\geq2},\]
where $\Phi=(\Lambda,r,\varphi)$ and $\ol{P}$ are as in Definition~\ref{def:wd}, and where $\pi$ is the obvious function.
\end{remark}

In \cite{Spivak:2013}, it is shown that packs and wiring diagrams form a category-theoretic structure, called an \emph{operad}, and that relations form an \emph{algebra} on this operad; a similar point of view is found in \cite{Fong}. While the details are beyond the scope of this paper, there are two basic and interacting ideas here. First, wiring diagrams can be nested inside each other to form a more complex wiring diagram. That is, we can zoom in and out. Second, a wiring diagram specifies an array multiplication formula, and this formula is associative with respect to nesting. We will discuss this formula in Section~\ref{sec:GAM}.

\section{Plotting relations as arrays}\label{sec:plotting}

Now that we have introduced packs and wiring diagrams, we are ready to  apply the pixel array method. The first step is to plot relations as arrays, one for each pack $(P,a,b,r)$. Let $B\coloneqq\BBox(P)\ss\Real^{\#P}$ be the associated bounding box, as in Definition~\ref{def:pack}.

Our goal is to take any functionally-defined relation on $B$ and associate to it a boolean array $A\colon\Entr(P)\to\{0,1\}$, called its \emph{plot}; see Definition~\ref{def:entries}. We use the resolutions $r(p)\in\NN$, one for each $p\in\{1,\ldots,\#P\}$, to divide $B$ into equally-sized \emph{pixels}. Each entry $e\in\Entr(P)$ in the array corresponds to one of these subcubes, i.e.\ we can define a function $B\to\Entr(P)$ under which the preimage of an entry $e\in\Entr(P)$ is the following subset of $B$:
\begin{equation}\label{eqn:Pixel_def}
\Pixel(e) = \prod_{p \in P} \Big[a_p + \delta_p*\big(\pi_p(e) - 1\big), a_p + \delta_p*\pi_p(e)\Big)
\end{equation}
where $\pi_p\colon\Entr(P)\to\{1,\ldots,r(p)\}$ returns the $p$th coordinate and $\delta_p\coloneqq\frac{b_p-a_p}{r_p}$. Each pixel is thus a half-open subcube of $B$, whose $p$th coordinate has length $\delta_p$, and it follows that the $l_\infty$-radius of the pixel is $\delta(P)\coloneqq \max_{p\in P}(\delta_p/2)$. We will let $d(x, y)$ denote distance under the $l_\infty$-metric.

Because the bounding box $B$ is compact, any continuous function $f\colon B\to\Real^k$ is in fact \emph{uniformly continuous} on $B$. That is, for any $\epsilon>0$, there exists $\delta>0$ such that $(\epsilon,\delta)$ satisfies the uniform continuity relation:
\begin{equation}\label{eqn:continuity}
\forall x,y\in B, d(x,y)<\delta\Rightarrow d(f(x),f(y))<\epsilon.
\end{equation}
For any pack $P$, let $\delta(P)$ be its radius as above. We say that $\epsilon$ is \emph{valid tolerance for $f$ on $P$} if $(\epsilon,\delta(P))$ satisfies the uniform continuity relation \eqref{eqn:continuity}.

The relations on $B$ that we want to consider plotting are those generated by systems of $k$ equations $f_i(x_1,\ldots,x_n)=0$, where $1\leq i\leq k$, or systems of inequalities $f\leq 0$ or $f<0$. The results of the pixel array method are slightly more general relations, which we call existentially-defined, as we now make precise.

\begin{definition}\label{def:functionally_defined}
Suppose given a pack $P$ with bounding box $B\coloneqq\BBox(P)$. A \emph{functionally-defined relation on $P$} is a relation of the form
\[R_{f,S}\coloneqq \{x\in B\mid f(x)\in S\},\] 
where $f\colon B\to\Real^k$ is a function for some $k\in\NN$ and $S\ss\Real^k$ is a subset. We refer to $f$ as the \emph{defining function} and to $S$ as the \emph{target subset}.

More generally, an \emph{existentially-defined relation on $P$} is a relation of the form
\[R_{E,\pi,f,S}\coloneqq \{\pi(x)\in B\mid x\in E, f(x)\in S\},\]
where $E\ss\Real^m$ is a region for some $m\in\NN$, where $\pi\colon E\to B$ and $f\colon E\to\Real^k$ are continuous functions, and where $S\ss\Real^k$ is a subset. Let $\Rel(P)$ denote the set of all existentially-defined relations on $P$.
\end{definition}
To be clear, the relations $f(x)=0$, $f(x)\leq 0$, and $f(x)<0$ would have target subsets $\{s\in\Real\mid s=0\}$, $\{s\in\Real\mid s\leq 0\}$, and $\{s\in\Real\mid s<0\}$ respectively.

\begin{remark}
It is easy to see that functionally-defined relations are a special case of existentially defined relations, taking $E=B$ and $\pi=\id_E$.
\end{remark}

Before describing a method for plotting functionally-defined relations, we first describe useful ways of bounding the error of such a plot; we will eventually see that these bounds are preserved by array multiplication. We do not assume that one can calculate values of the function $f$ perfectly, but only up to some precision, as follows.

\begin{definition}\label{def:plot_error}
    Let $R_{f,S}\ss B$ be a functionally-defined relation on a bounding box $B$. For any subset $N\ss B$, we define the \emph{$N$-error set of $R_{f,S}$} to be the set of distances
    \[D_f(N,S)\coloneqq\big\{d\big(f(x),y\big)\in\Real_{\geq 0}\mid x\in N, y\in S\big\}.%
    \footnote{If $R_{E,\pi,f,S}$ is an existentially-defined relation, then for any $N\ss B$, define the \emph{$N$-error set of $R_{E,\pi,f,S}$} as 
    \[D_f(N,S)\coloneqq\big\{d\big(f(x),y\big)\in\Real_{\geq 0}\mid \pi(x)\in N, y\in S\big\}.\]
    }
    \]
 
    For $\ell,u\geq 0$, we say that the \emph{$N$-error of $R_{f,S}$ is always above $\ell$} if $D_f(N,S)\ss(\ell,\infty)$, the \emph{$N$-error of $R_{f,S}$ achieves $u$} if $D_f(N,S)\cap[0,u]\neq\emptyset$, and the \emph{$N$-error of $R_{f,S}$ is bounded by $u$} if $D_f(\{x\},S) \cap [0,u]\neq\emptyset$ for all $x \in N$.
\end{definition}

\begin{proposition}\label{prop:contplot}
Suppose given a pack $P$ with bounding box $B\coloneqq\BBox(P)$, a functionally-defined relation $R$ on it, and a valid tolerance $\epsilon$. Choose an entry $e\in\Entr(P)$, let $\Pixel(e)\ss B$ be the corresponding pixel as in \eqref{eqn:Pixel_def}, and let $c_e$ be its center point. Then:
\begin{itemize}
  \item if the $\{c_e\}$-error of $R$ is always above $\ell$ then the $\Pixel(e)$-error is always above $\ell - \epsilon$.
  \item if the $\{c_e\}$-error of $R$ is achieves $u$ then the $\Pixel(e)$-error is bounded by $u + \epsilon$.
\end{itemize}
\end{proposition}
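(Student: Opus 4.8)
The plan is to reduce both bullet points to a single distance estimate and then propagate the hypothesis, which is stated at the center $c_e$, out to all of $\Pixel(e)$ by the triangle inequality. The estimate I would establish first is that $d\big(f(x),f(c_e)\big)\le\epsilon$ for every $x\in\Pixel(e)$. This follows directly from the meaning of valid tolerance: since the $p$th side of $\Pixel(e)$ has length $\delta_p$ and $c_e$ is its midpoint, any $x\in\Pixel(e)$ satisfies $|x_p-(c_e)_p|\le\delta_p/2$ for each $p$, whence $d(x,c_e)\le\max_p(\delta_p/2)=\delta(P)$ in the $l_\infty$-metric. Because $\epsilon$ is a valid tolerance for $f$ on $P$, the pair $(\epsilon,\delta(P))$ satisfies the uniform continuity relation \eqref{eqn:continuity}, which delivers the bound $d\big(f(x),f(c_e)\big)\le\epsilon$.

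Granting this estimate, the first bullet is a short computation. I would fix $x\in\Pixel(e)$ and $y\in S$. The hypothesis that the $\{c_e\}$-error is always above $\ell$ says $D_f(\{c_e\},S)\ss(\ell,\infty)$, i.e.\ $d\big(f(c_e),y\big)>\ell$. The triangle inequality $d\big(f(c_e),y\big)\le d\big(f(c_e),f(x)\big)+d\big(f(x),y\big)$ then rearranges to
\[
d\big(f(x),y\big)\ \ge\ d\big(f(c_e),y\big)-d\big(f(c_e),f(x)\big)\ >\ \ell-\epsilon,
\]
where the strict inequality survives because the center bound $d(f(c_e),y)>\ell$ is strict while $d(f(c_e),f(x))\le\epsilon$. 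As $x$ and $y$ were arbitrary, $D_f(\Pixel(e),S)\ss(\ell-\epsilon,\infty)$, which is precisely the assertion that the $\Pixel(e)$-error is always above $\ell-\epsilon$.

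For the second bullet the hypothesis that the $\{c_e\}$-error achieves $u$ supplies a \emph{single} witness $y_0\in S$ with $d\big(f(c_e),y_0\big)\le u$. For an arbitrary $x\in\Pixel(e)$, the triangle inequality together with the estimate above gives $d\big(f(x),y_0\big)\le d\big(f(x),f(c_e)\big)+d\big(f(c_e),y_0\big)\le\epsilon+u$, so the same $y_0$ certifies $D_f(\{x\},S)\cap[0,u+\epsilon]\neq\emptyset$. Since this holds for every $x\in\Pixel(e)$, the $\Pixel(e)$-error is bounded by $u+\epsilon$ in the sense of Definition~\ref{def:plot_error}.

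The one point requiring care is the passage from $d(x,c_e)\le\delta(P)$ to $d(f(x),f(c_e))\le\epsilon$: the relation \eqref{eqn:continuity} has a \emph{strict} hypothesis $d(x,c_e)<\delta(P)$, yet a point $x$ at the lower corner of the half-open pixel can sit at distance exactly $\delta(P)$ from $c_e$. I would close this gap by a limiting argument, using that $\Pixel(e)$ is convex and contained in $B$: the segment from $c_e$ to such an $x$ stays inside the pixel, its interior points lie strictly within $\delta(P)$ of $c_e$ (so each obeys the strict bound $<\epsilon$), and continuity of $f$ upgrades this to $\le\epsilon$ at the limit point $x$. This explains why the conclusions are phrased with thresholds $\ell-\epsilon$ and $u+\epsilon$ rather than sharper ones: the nonstrict estimate $d(f(x),f(c_e))\le\epsilon$ is all that is available, and the strict slack at $\ell$ together with the nonstrict slack at $u$ is exactly what it needs to be matched against.
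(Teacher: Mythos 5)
Your proof is correct and follows essentially the same route as the paper's: bound $d\big(f(x),f(c_e)\big)$ by $\epsilon$ via the valid-tolerance hypothesis, then apply the triangle inequality to each bullet. In fact you are slightly more careful than the paper, whose proof asserts the strict bound $d(x,c_e)<\delta(P)$ for all $x\in\Pixel(e)$---which fails at the pixel's closed corner, where the distance equals $\delta(P)$ exactly---and your limiting argument along the segment from $c_e$ to $x$ closes precisely that gap while still yielding the stated conclusions.
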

\begin{proof}
    Both facts follow from the fact that $d$ is a metric on $\Real^k$. For the first, given $x\in\Pixel(e)$ and $y\in S$, we have $d(x, c_e) < \delta(P)$, so $d(f(x), f(c_e)) < \epsilon$, so $\ell < d(f(c_e), y) < \epsilon + d(f(x), y)$.

For the second fact, we again have $d(f(x), f(c_e)) < \epsilon$ for any $x\in\Pixel(e)$, and there is some $y \in S$ such that $d(f(c_e), y) \leq u$. Thus we find that $d(f(x), y) < u + \epsilon$.
\end{proof}

We are now ready to describe a method for plotting relations for which we have control of the error.

\begin{definition}\label{def:sample_in_center}
Suppose given a pack $P$ with bounding box $B\coloneqq\BBox(P)$ and a functionally-defined relation $R$ on it. For each entry $e\in\Entr(P)$, let $c_e$ denote the center of the corresponding $\Pixel(e)$. For any valid tolerance $\epsilon$, we define the \emph{$\epsilon$-tolerance sample-in-center plot} to be the array $A\colon\Entr(P)\to\{0,1\}$ given by
\[
A(e)=
\begin{cases}
1&\text{ if the }\{c_e\}\text{-error achieves }\epsilon;\\
0&\text{ otherwise.}
\end{cases}
\]
\end{definition}
For example, to plot an equation $f(x_1,\ldots,x_n)=0$, we have $k=1$, $S=\{0\}$ and $R=R_{f,S}$. The $\{c_e\}$-error of $R$ achieves $\epsilon$ iff $f(c)\leq\epsilon$. Thus the sample-in-center plot reduces to simply calculating $f$ at the center point of each pixel, and determining whether or not it is less than the threshold $\epsilon$.


For an entry $e$, what does the boolean value $A(e)\in\{0,1\}$ tell us about the relationship between the points $x\in\Pixel(e)$, the function $f$, and the target subset $S$? We now define the pixel array method's accuracy guarantee.

\begin{definition}\label{def:accurate_plot}
Let $P$ be a pack, $R$ an existentially-defined relation on it, and $\epsilon>0$. We say that $A\in\Arr(P)$ is an \emph{$\epsilon$-accurate plot of $R$ on $P$} if the following hold for any entry $e\in\Entr(P)$:
\begin{compactitem}
    \item If the $\Pixel(e)$-error of $R$ achieves $0$ then $A(e) = 1$.
    \item If the $\Pixel(e)$-error of $R$ is always above $\epsilon$ then $A(e) = 0$.
    \item If $A(e) = 1$ then the $\Pixel(e)$-error of $R$ is bounded by $2\epsilon$.
\end{compactitem}
\end{definition}

The following is a corollary of Proposition~\ref{prop:contplot}.
\begin{corollary}\label{cor:guarantees}
With notation as in Definition~\ref{def:sample_in_center}, let $A$ be the $\epsilon$-tolerance sample-in-center plot. Then it is an $\epsilon$-accurate plot in the sense of Definition~\ref{def:accurate_plot}.
\end{corollary}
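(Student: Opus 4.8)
The goal is to verify the three bullets of Definition~\ref{def:accurate_plot} for the array $A$ of Definition~\ref{def:sample_in_center}, where by construction $A(e)=1$ exactly when the $\{c_e\}$-error of $R$ achieves $\epsilon$. Two simple observations will drive the whole argument. First, the center satisfies $c_e\in\Pixel(e)$, so $\{c_e\}\ss\Pixel(e)$, and hence the error sets are monotone under this inclusion: $D_f(\{c_e\},S)\ss D_f(\Pixel(e),S)$. Second, since $\epsilon$ is a valid tolerance and the $l_\infty$-radius of $\Pixel(e)$ is $\delta(P)$, any $x\in\Pixel(e)$ is close enough to $c_e$ that $d(f(x),f(c_e))<\epsilon$; this is exactly the uniform-continuity estimate already exploited in the proof of Proposition~\ref{prop:contplot}.

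The third and second bullets are then immediate. For the third bullet, suppose $A(e)=1$; by definition the $\{c_e\}$-error achieves $\epsilon$, so the second bullet of Proposition~\ref{prop:contplot} with $u=\epsilon$ gives that the $\Pixel(e)$-error is bounded by $\epsilon+\epsilon=2\epsilon$, as required. For the second bullet I would argue the contrapositive: if $A(e)=1$ then some $y\in S$ satisfies $d(f(c_e),y)\le\epsilon$, and since $c_e\in\Pixel(e)$ this distance lies in $D_f(\Pixel(e),S)\cap[0,\epsilon]$, so $D_f(\Pixel(e),S)\not\ss(\epsilon,\infty)$; hence the $\Pixel(e)$-error is not always above $\epsilon$. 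Equivalently, "always above $\epsilon$" forces $A(e)=0$.

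The first bullet is the crux, encoding the no-false-negatives guarantee. Suppose the $\Pixel(e)$-error achieves $0$, i.e. $0\in D_f(\Pixel(e),S)$. Then there is a genuine solution inside the pixel: a point $x\in\Pixel(e)$ and $y\in S$ with $d(f(x),y)=0$, so $f(x)=y\in S$. Because $x\in\Pixel(e)$ we have $d(x,c_e)\le\delta(P)$, and validity of the tolerance $\epsilon$ gives $d(f(c_e),f(x))<\epsilon$. Taking this same $y=f(x)\in S$ yields $d(f(c_e),y)<\epsilon$, so the $\{c_e\}$-error achieves $\epsilon$ and therefore $A(e)=1$. Note that this bullet cannot be read off directly from Proposition~\ref{prop:contplot}, whose two implications both run from the center-error to the pixel-error; instead it re-deploys the same estimate in the opposite direction.

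The one delicate point is precisely this first bullet, where the argument must transport an exact root lying somewhere in the half-open cube $\Pixel(e)$ back to the sampled center $c_e$. The lone technical wrinkle is that the half-open pixel allows $d(x,c_e)$ to attain $\delta(P)$ on the lower faces, so to conclude the strict inequality $d(f(x),f(c_e))<\epsilon$ one should adopt exactly the radius/tolerance convention used in the proof of Proposition~\ref{prop:contplot} (where $d(x,c_e)<\delta(P)$ is invoked). Once that convention is fixed, each of the three bullets is a one-line consequence of monotonicity of $D_f$ in its first argument together with Proposition~\ref{prop:contplot}.
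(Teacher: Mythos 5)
Your proof is correct, and it is essentially the derivation the paper intends: the paper offers no explicit proof, saying only that the statement is a corollary of Proposition~\ref{prop:contplot}, and your three verifications supply exactly the missing details (monotonicity of $D_f(-,S)$ in its first argument for the second bullet, the uniform-continuity estimate for the first and third). One side remark in your write-up is inaccurate, though: you claim the first bullet cannot be read off directly from Proposition~\ref{prop:contplot} because its implications run from center-error to pixel-error. In fact it can. In Definition~\ref{def:plot_error}, ``the $N$-error achieves $u$'' (i.e.\ $D_f(N,S)\cap[0,u]\neq\emptyset$) is precisely the negation of ``the $N$-error is always above $u$'' (i.e.\ $D_f(N,S)\ss(u,\infty)$), so the contrapositive of the Proposition's first implication with $\ell=\epsilon$ reads: if the $\Pixel(e)$-error achieves $0$, then the $\{c_e\}$-error achieves $\epsilon$, which is exactly $A(e)=1$. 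Your direct argument---transporting an exact root $x\in\Pixel(e)$ to the center via uniform continuity---is that same estimate unpacked, so nothing is mathematically amiss; recognizing the contraposition would simply let you cite the Proposition for both the first and third bullets and reserve hands-on work for the easy second one. Finally, your flagging of the half-open-pixel boundary issue (that $d(x,c_e)$ can attain $\delta(P)$ on lower faces, so strict uniform continuity does not literally apply there) is a fair catch; that looseness sits in the paper's own proof of Proposition~\ref{prop:contplot}, and adopting its convention, as you do, is the right call.
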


This guarantee also yields a potential computational optimization by not plotting any region $N\ss B$ for which the $N$-error is always above $\epsilon$.

\section{Generalized array multiplication}\label{sec:GAM}

A wiring diagram specifies an array multiplication formula, as formalized in the following theorem. 
\begin{theorem}[\cite{Spivak:2013}]\label{thm:associative}
To every wiring diagram $\Phi\colon P_1,\ldots, P_n\to P'$ we can associate an \emph{array multiplication} formula, i.e.\ a function 
\[\Arr(\Phi)\colon\Arr(P_1)\times\cdots\times\Arr(P_n)\to\Arr(P')\]
This functions is monotonic and composes associatively under nesting of wiring diagrams.
\end{theorem}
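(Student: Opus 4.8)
The plan is to write the formula down explicitly and then verify monotonicity and associativity directly, leaning on the partially ordered semiring structure of $\Bool$ and, for the combinatorics of nesting, on the operad structure of \cite{Spivak:2013}. Using the projection maps $\Entr_\Phi^i$ and $\Entr_\Phi'$ of \eqref{eqn:Entr_map}, I would define, for inputs $A_i\in\Arr(P_i)$ and an output entry $e'\in\Entr(P')$,
\[
\Arr(\Phi)(A_1,\ldots,A_n)(e')\coloneqq\sum_{\substack{c\in\Entr(\Lambda)\\\Entr_\Phi'(c)=e'}}\ \prod_{i=1}^n A_i\big(\Entr_\Phi^i(c)\big),
\]
where $\sum$ and $\prod$ use the semiring operations of $\Bool$. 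Over the booleans this reads: the pixel $e'$ is on exactly when some link-entry $c$ lying over $e'$ turns on every input pixel $A_i(\Entr_\Phi^i(c))$, which is the intended existential reading ``there exist latent link-values simultaneously solving all relations.'' I would then check that this recovers ordinary matrix multiplication in the two-pack case $\Lambda=\{x,w,y\}$, where the fiber of $\Entr_\Phi'$ over $(c_x,c_y)$ is indexed by the shared coordinate $c_w$ and the sum becomes $\sum_{c_w}A_1(c_x,c_w)\,A_2(c_w,c_y)$; trace and Kronecker product arise the same way from the appropriate link structures.

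Monotonicity is then immediate. Since $\Bool$ is a partially ordered semiring, both $+$ and $*$ preserve the order $0\le1$, so a sum of products of the input values is order-preserving in each argument; hence $A_i\le B_i$ for all $i$ forces $\Arr(\Phi)(A_1,\ldots,A_n)\le\Arr(\Phi)(B_1,\ldots,B_n)$ entrywise.

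For associativity I would fix an outer diagram $\Psi\colon Q_1,\ldots,Q_m\to P'$ with links $M$ and, for each $j$, an inner diagram $\Phi_j\colon P_{j,1},\ldots,P_{j,k_j}\to Q_j$ with links $\Lambda_j$, together with the nested composite $\Theta$ (links $\Xi$) produced by the operad composition of \cite{Spivak:2013}. Writing $B_j\coloneqq\Arr(\Phi_j)(A_{j,1},\ldots,A_{j,k_j})$, expanding $\Arr(\Psi)(B_1,\ldots,B_m)(e')$ by the definition, substituting the formula for each $B_j$, and applying distributivity of $*$ over $+$ to turn the product of sums into a single sum over tuples, I obtain
\[
\Arr(\Psi)(B_1,\ldots,B_m)(e')=\sum_{\substack{c\in\Entr(M)\\\Entr_\Psi'(c)=e'}}\ \sum_{\substack{(c_1,\ldots,c_m)\\\Entr_{\Phi_j}'(c_j)=\Entr_\Psi^j(c)}}\ \prod_{j,i}A_{j,i}\big(\Entr_{\Phi_j}^i(c_j)\big).
\]
The definition also gives $\Arr(\Theta)(\ldots)(e')=\sum_{\xi\in\Entr(\Xi),\,\Entr_\Theta'(\xi)=e'}\prod_{j,i}A_{j,i}(\Entr_\Theta^{j,i}(\xi))$, so the two sides agree once I exhibit a bijection between composite link-entries $\xi$ with $\Entr_\Theta'(\xi)=e'$ and compatible pairs $(c,(c_j)_j)$ satisfying $\Entr_\Psi'(c)=e'$ and $\Entr_{\Phi_j}'(c_j)=\Entr_\Psi^j(c)$ for all $j$, under which the inner projections match, i.e.\ $\Entr_\Theta^{j,i}(\xi)=\Entr_{\Phi_j}^i(c_j)$.

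This bijection is the main obstacle, and establishing it is where the operadic bookkeeping lives. It amounts to showing that a value-assignment to all links of $\Theta$ is the same data as a value-assignment to the outer links $M$ together with one to each inner link set $\Lambda_j$, subject to agreement on the values carried by the shared boundary ports of $Q_j$ (which are internal in $\Theta$). I would derive it from the explicit description of $\Xi$ under operad composition: a composite link is an equivalence class of ports obtained by gluing the classes of $M$ and of the $\Lambda_j$ along the ports of the $Q_j$, so $\Entr(\Xi)$ is precisely the fiber product $\Entr(M)\times_{\prod_j\Entr(Q_j)}\prod_j\Entr(\Lambda_j)$, and the gluing condition is exactly $\Entr_{\Phi_j}'(c_j)=\Entr_\Psi^j(c)$; the injectivity of \eqref{eqn:injectivity} ensures the matched inner projections determine $\xi$ uniquely. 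Because the bijection preserves both $e'$ and the full tuple of inner-projection entries, the summands of the two displayed sums coincide termwise, which proves associativity.
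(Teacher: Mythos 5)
Your proposal is correct, but it cannot be ``the same as the paper's proof'' for a simple reason: the paper gives no proof of this theorem at all. It is stated as imported from \cite{Spivak:2013}; the paper's own contribution here is only to write down the defining formula --- Algorithm~\ref{alg:AMF} and Equation~\eqref{eqn:GAM} --- and to assert monotonicity (as the formalization of ``no false negatives'') and associativity (in the sense that $\Arr$ is an algebra on the operad of wiring diagrams), deferring both to the citation. Your proposal reconstructs exactly what that citation carries. Your formula coincides with \eqref{eqn:GAM}, your monotonicity argument (sums and products in a partially ordered semiring preserve the order in each argument) is the standard one, and your matrix-multiplication sanity check is essentially the computation the paper performs in its \emph{subsequent} theorem on specializing to multiplication, trace, and Kronecker product. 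The genuinely substantive content is your associativity argument, and there you identify the right key lemma: that for a composite $\Theta=\Psi\circ(\Phi_1,\ldots,\Phi_m)$ the set of composite link-entries is the fiber product
\[
\Entr(\Xi)\;\cong\;\Entr(M)\times_{\prod_j\Entr(Q_j)}\prod_j\Entr(\Lambda_j),
\]
which holds because $\Xi$ is the quotient of $M\sqcup\bigsqcup_j\Lambda_j$ by the identifications $\varphi_j(q)\sim\psi(q)$ over ports $q$ of the intermediate packs, and entry-sets turn such quotients into fiber products; combined with generalized distributivity in the commutative semiring $\Bool$, this makes the two iterated sums agree termwise. Two small items you should still discharge to make this a complete proof of the operad-algebra statement: (i) check that the composite $\Theta$ is itself a legal wiring diagram (the surjectivity condition of Definition~\ref{def:wd} passes to the quotient, since every $M$-link is hit by a $Q_j$-port, which is linked in $\Lambda_j$ to a deep inner port); and (ii) the unit axiom, i.e.\ that the identity wiring diagram induces the identity on $\Arr(P)$, which is a one-line verification from \eqref{eqn:GAM}. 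Neither is a gap in your main argument, which buys the reader a self-contained justification where the paper offers only a pointer to the literature.
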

The array multiplication formula $\Arr(\Phi)$---which takes in arrays for inner packs and produces their product array in the outer pack---will be given below \eqref{eqn:GAM}. The monotonicity of $\Arr(\Phi)$ is a formalization of the "no false negatives" assertion we made in Section~\ref{ch:introduction}. That is, given arrays $A,B\in\Arr(P)$ for some pack $P$, we write $A\leq B$ if $A(e)\leq B(e)$ for all entries $e\in\Entr(P)$; monotonicity means that the function $\Arr(\Phi)$ preserves this ordering. The associativity of $\Arr(\Phi)$ is meant in the precise technical sense that $\Arr$ is an algebra on the operad of wiring diagrams; see \cite{Spivak:2013}. In simple terms, one can multiply arrays in any order and will obtain the same result.

The remaining goal for Section~\ref{sec:GAM} is to give an algorithm for $\Arr(\Phi)$ and to show that it generalizes matrix multiplication, trace, and Kronecker product. So fix a wiring diagram $\Phi\colon P_1,\ldots,P_n\to P'$, with links $\Lambda=(\ell_1,\ldots,\ell_m)$, and suppose given an array $A_i\in\Arr(P_i)$ for each $i$. We will construct the array $\Arr(\Phi)(A_1,\ldots,A_n)\in\Arr(P')$. Recall the functions $\Entr_\Phi^i\colon\Entr(\Lambda)\to\Entr(P_i)$ and $\Entr_\Phi'\colon\Entr(\Lambda)\to\Entr(P')$ from \eqref{eqn:Entr_map}.

\begin{algorithm}
  \caption{Generalized array multiplication}
    \label{alg:AMF}
  \begin{algorithmic}[1]
    \Require{Wiring diagram $\Phi\colon P_1,\ldots,P_n\to P'$ and arrays $A_i\in\Arr(P_i)$.}
    \Statex
    \Function{$\Arr(\Phi)$}{$A_1,\ldots,A_n$}
      \Let{$A'$}{$0$}  \Comment{$A'\in\Arr(P')$}
      \For{$e\in\Entr(\Lambda)$}
        \Let{$a_e$}{1}
        \For{$i\in\{1,\ldots,n\}$}
          \Let{$e_i$}{$\Entr_{\Phi}^i(e)$}   \Comment{See Equation~\eqref{eqn:Entr_map}}
          \Let{$a_e$}{$a_e*A_i(e_i)$}  \Comment{See Section~\ref{notation}}
        \EndFor
        \Let{$e'$}{$\Entr_{\Phi}'(e)$}  \Comment{See Equation~\eqref{eqn:Entr_map}}
        \Let{$A'(e')$}{$A'(e')+a_e$}  
      \EndFor
      \State \Return{$A'$}
    \EndFunction
  \end{algorithmic}
\end{algorithm}
The following \emph{generalized array multiplication formula} says the same thing and is a direct generalization of the usual matrix multiplication formula:%
\footnote{As mentioned in \ref{notation}, the operations $+$ and $*$ in $\Bool$ are given by OR ($\vee)$ and AND ($\wedge$). We use symbols $+$ and~$*$ because they more familiar in the context of matrix multiplication, and because the algorithm works in any semiring $\BB$. Similarly, in the formula \eqref{eqn:GAM}, symbols $\sum$ and $\prod$ are given by $\bigvee$ and $\bigwedge$.}
\begin{equation}\label{eqn:GAM}
A'(e')\coloneqq\sum_{\parbox{.7in}{\centering\scriptsize$e\in\Entr(\Lambda)$\\$\Entr'_\Phi(e)=e'$}}\;\;\prod_{i=1}^n A_i\left(\Entr_\Phi^i(e)\right)
\end{equation}

\begin{theorem}
The generalized array multiplication formula \eqref{eqn:GAM} is linear in each input array, and it generalizes the usual matrix multiplication, trace, and Kronecker product operations.
\end{theorem}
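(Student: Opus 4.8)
The plan is to prove the two assertions independently: linearity in each argument is an immediate consequence of distributivity in the semiring $\BB$, while the three classical operations each arise by specializing \eqref{eqn:GAM} to a carefully chosen wiring diagram. For the latter, in every case the work is purely bookkeeping: choose the inner packs, the outer pack $P'$, the link set $\Lambda$, and the linking function $\varphi$, then read off $\Entr_\Phi^i$ and $\Entr'_\Phi$ and watch the formula collapse.

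First I would establish linearity. Fix an index $j\in\{1,\ldots,n\}$ and hold the remaining arrays $A_i$ (for $i\neq j$) fixed. In \eqref{eqn:GAM} the array $A_j$ appears exactly once in each product term, namely as the factor $A_j(\Entr_\Phi^j(e))$, so I can write the product as $A_j(\Entr_\Phi^j(e))*c_e$ where $c_e\coloneqq\prod_{i\neq j}A_i(\Entr_\Phi^i(e))$ does not involve $A_j$. Hence
\[
A'(e')=\sum_{\substack{e\in\Entr(\Lambda)\\\Entr'_\Phi(e)=e'}}A_j(\Entr_\Phi^j(e))*c_e,
\]
which by distributivity of $*$ over $+$ in $\BB$ is manifestly a $\BB$-linear function of $A_j$: substituting $A_j+A_j'$ or $\lambda * A_j$ distributes through the sum. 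As $j$ was arbitrary, $\Arr(\Phi)$ is linear in each input array.

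Next I would realize the three operations. For matrix multiplication of an $\ord{m}\times\ord{n}$ array $A_1$ by an $\ord{n}\times\ord{p}$ array $A_2$, take $P_1$ with ports of resolution $m,n$, take $P_2$ with ports of resolution $n,p$, and take $P'$ with ports of resolution $m,p$; let $\varphi$ send the two $n$-ports to a common link and each of the $m$-port and $p$-port to its matching outer port. Then $\Entr(\Lambda)=\ord{m}\times\ord{n}\times\ord{p}$, with $\Entr_\Phi^1(i,j,k)=(i,j)$, $\Entr_\Phi^2(i,j,k)=(j,k)$, and $\Entr'_\Phi(i,j,k)=(i,k)$; since the shared link is not an outer port, the constraint $\Entr'_\Phi(e)=(i,k)$ leaves $j$ free, and \eqref{eqn:GAM} reads $A'(i,k)=\sum_{j=1}^{n}A_1(i,j)A_2(j,k)$, the matrix product. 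For the Kronecker (tensor) product I would instead take two inner packs with disjoint ports, no shared links, and an outer pack $P'$ carrying all of their ports; then $\Entr'_\Phi$ is a bijection $\Entr(\Lambda)\cong\Entr(P')$, so the inner sum collapses to a single term and $A'(e_1,e_2)=A_1(e_1)A_2(e_2)$, the array-theoretic form of the Kronecker product.

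The trace requires the most care, so I expect it to be the main (if minor) obstacle. Here I would take a single inner pack $P_1$ with two ports of equal resolution $n$, link both ports to one common link $\ell$, and take the outer pack $P'$ to be \emph{empty}. By the convention recorded in the footnote to Definition~\ref{def:entries}, $\Entr(\emptyset)$ is a singleton and $\Arr(\emptyset)=\BB$, so $\Entr'_\Phi$ sends every entry of $\Entr(\Lambda)=\ord{n}$ to that single element and the summation constraint is vacuous. Since $\Entr_\Phi^1(i)=(i,i)$, formula \eqref{eqn:GAM} yields the single scalar $A'()=\sum_{i=1}^{n}A_1(i,i)=\Tr(A_1)$. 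The only subtlety is recognizing that an empty outer pack forces the entire diagonal to be summed into one scalar; once the $0$-dimensional array convention is invoked, the identification is immediate.
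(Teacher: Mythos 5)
Your proposal is correct and follows essentially the same route as the paper: the same three wiring diagrams (shared $n$-link for matrix multiplication, a single looped pack with empty outer pack for trace, disjoint ports all exposed for the Kronecker product), and the same collapse of \eqref{eqn:GAM} in each case. The only difference is one of thoroughness---the paper verifies only the matrix-multiplication case explicitly and asserts linearity and the other two cases as straightforward, whereas you spell out the distributivity argument for linearity and the trace/Kronecker bookkeeping (including the $\Arr(\emptyset)=\BB$ convention) in full.
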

\begin{proof}
It is easy to see from \eqref{eqn:GAM} that array multiplication is linear in the sense that if an input array is equal to a linear combination of others, say $A_1=c*M+N$, then the result will be the respective linear combination,
\[\Arr(A_1,\ldots,A_n)=c*\Arr(M,A_2,\ldots,A_n)+\Arr(N,A_2,\ldots,A_n).\]

The multiplication, trace, and Kronecker product operations correspond respectively to the following wiring diagrams:%
\footnote{In \eqref{dia:series_parallel} we denote packs using squares---rather than circles---just to make the diagrams look nicer.}
\begin{align}\label{dia:series_parallel}
\begin{tikzpicture}[oriented WD, baseline=(Y.center), bbx=1em, bby=1ex]
	\node[bb={1}{1},bb name=$M$] (X1) {};
	\node[bb={1}{1},right =2 of X1, bb name=$N$] (X2) {};
	\node[bb={1}{1}, fit={($(X1.north west)+(-1,3)$) ($(X1.south)+(0,-3)$) ($(X2.east)+(1,0)$)}, bb name = $MN$] (Y) {};
	\draw[ar] (Y_in1') to (X1_in1);
	\draw[ar] (X1_out1) to (X2_in1);
	\draw[ar] (X2_out1) to (Y_out1');
	\draw[label] 
		node[below left=3pt and 3pt of X1_in1]{$m$}
		node[below right=3pt and 3pt of X1_out1]{$n$}
		node[below right=3pt and 3pt of X2_out1] {$p$};
\end{tikzpicture}
\qquad\qquad
\begin{tikzpicture}[oriented WD,baseline=(cod.center), bbx=1em, bby=1ex]
	\node[bb={1}{1}, bb name=$P$] (dom) {};
	\node[bb={0}{0}, fit={(dom) ($(dom.north east)+(1,4)$) ($(dom.south west)-(1,2)$)}, bb name = $\Tr(P)$] (cod) {};
	\draw[ar] let \p1=(dom.south east), \p2=(dom.south west), \n1={\y2-\bby}, \n2=\bbportlen in (dom_out1) to[in=0] (\x1+\n2,\n1) -- (\x2-\n2,\n1) to[out=180] (dom_in1);
	\draw[label] 
		node[above left=2pt and 3pt of dom_in1] {$n$}
		node[above right=2pt and 3pt of dom_out1] {$n$};
\end{tikzpicture}
\qquad\qquad
\begin{tikzpicture}[oriented WD,baseline=(Y.center), bbx=1em, bby=1ex]
	\node[bb={1}{1},bb name=$M_1$] (X1) {};
	\node[bb={1}{1},below =2 of X1, bb name=$M_2$] (X2) {};
	\node[bb={2}{2}, fit={($(X1.north west)+(-1,3)$) ($(X2.south)+(0,-3)$) ($(X2.east)+(1,0)$)}, bb name = $M_1\otimes M_2$] (Y) {};
	\draw[ar] (Y_in1') to (X1_in1);
	\draw[ar] (Y_in2') to (X2_in1);
	\draw[ar] (X1_out1) to (Y_out1');
	\draw[ar] (X2_out1) to (Y_out2');
	\draw[label] 
		node[below left=3pt and 3pt of X1_in1]{$m_1$}
		node[below right=3pt and 3pt of X1_out1]{$n_1$}
		node[below left=3pt and 3pt of X2_in1] {$m_2$}
		node[below right=3pt and 3pt of X2_out1] {$n_2$};
\end{tikzpicture}
\end{align}
Here, $M$ is $m\times n$ and $N$ is $n\times p$, as indicated, and similarly for $P,M_1,M_2$. From this point, checking that the algorithm returns the correct array in each case is straightforward, so we explain it only for the case of matrix multiplication.

The first wiring diagram $\Phi\colon P_M,P_N\to P_{MN}$ has inner packs $P_M=\{m_1,n_1\}$ and $P_N=\{n_2,p_2\}$, and outer pack $P_{MN}=\{m',p'\}$. It consists of three links $\Lambda=\{\ell_m,\ell_n,\ell_p\}$, where $\ell_m=\{m_1,m'\},\ell_n=\{n_1,n_2\},\ell_p=\{p_2,p'\}$. We want to show that $A'=\Arr(\Phi)(M,N)$, as defined in \eqref{eqn:GAM}, indeed returns the matrix product, $A'=MN$.

The set of entries for $\Lambda$ and $P_{MN}$ are
\[
\Entr(\Lambda)=\ord{m}\times\ord{n}\times\ord{p}
\qquad\text{and}\qquad
\Entr(P_{MN})=\ord{m}\times\ord{p}
\]
and, as usual, the function $\Entr_\Phi'\colon\Entr(\Lambda)\to\Entr(P_{MN})$ is the projection. Thus for any $(i,k)\in\Entr(P_{MN})$, the summation in \eqref{eqn:GAM} is over the set $(\Entr_\Phi')^{-1}(i,k)=\{(i,j,k)\mid 1\leq j\leq n\}$. Since $\Entr_\Phi^M(i,j,k)=(i,j)$ and $\Entr_\Phi^N(i,j,k)=(j,k)$, we obtain the desired matrix multiplication formula:
\[A'(i,k)=\sum_{j=1}^n M(i,j)*N(j,k).\qedhere\]
\end{proof}

\section{Accuracy of the pixel array method}
Up to this point, we have explained how to plot the equations (Section~\ref{sec:plotting}) and how to combine them to find the solution to the entire system using generalized array multiplication (Section~\ref{sec:GAM}). The algorithm as stated can be made much more efficient by clustering, as we explain in Section~\ref{sec:clustering}. For now, we want to discuss the accuracy of the method, which is unaffected by clustering.

Suppose we are given a wiring diagram $\Phi\colon P_1,\ldots, P_n\to P'$ as in Definition~\ref{def:wd} and that each pack $P_i$, is assigned an existentially-defined relation $R_i\coloneqq R_{E_i,\pi_i,f_i,S_i}\in\Rel(P_i)$ as in Definition~\ref{def:functionally_defined}. To describe the resulting system, we can take the "product" of these relations, as shown below.

Recall from \eqref{eqn:injectivity} that we have an injection $\BBox(\Lambda)\hookrightarrow\prod_i^n B_i$. By abuse of notation we write $b\in\BBox(\Lambda)$ if $b$ is in the image of this function. Define
\[E'\coloneqq\left\{(x_1,\ldots,x_n)\in\prod_{i=1}^n E_i\;\middle|\;\big(\pi_1(x_1)\times\cdots\times\pi_n(x_n)\big)\in\BBox(\Lambda)\right\}.\]
Let $k'\coloneqq k_1+\ldots+k_n$, and define $S'\coloneqq\prod_i S_i\ss\Real^{k'}$. Define $f'\colon E'\to\Real^{k'}$ coordinate-wise, i.e.
\[f'(x_1,\ldots,x_n)\coloneqq \big(f_1(x_1),\ldots,f_n(x_n)\big).\]
Finally, let $\pi'\colon E'\to B'$ denote the composite of the obvious map $E'\to\BBox(\Lambda)$ and the map $\BBox(\Lambda)\to B'$ from \eqref{eqn:BBox_map}, and let $\Rel(\Phi)(R_1,\ldots,R_n)\coloneqq R_{E',\pi',f',S'}$ be the resulting existentially-defined relation.

If $A_i\in\Arr(P_i)$ is the plot of relation $R_{E_i,\pi_i,f_i,S_i}$, then multiplying these arrays according to $\Phi$, the result $A'\coloneqq\Arr(\Phi)(A_1,\ldots,A_n)$ is supposed to provide a plot of $R'\coloneqq\Rel(\Phi)(R_1,\ldots,R_n)$. Recall the notion of $\epsilon$-accurate plot from Definition~\ref{def:accurate_plot}. The following result  says that $\epsilon$-accuracy is preserved under application of the pixel array method.

\begin{theorem}\label{thm:extended}
Let $\Phi\colon P_1,\ldots,P_n\to P'$ be a wiring diagram, and suppose given $R_i\in\Rel(P_i)$ and $A_i\in\Arr(P_i)$ for each $i$. Let $R'\coloneqq\Rel(\Phi)(R_1,\ldots,R_n)$ and $A'\coloneqq\Arr(\Phi)(A_1,\ldots,A_n)$. For any $\epsilon>0$, if each $A_i$ is an $\epsilon$-accurate plot of $R_i$ on $P_i$ then $A'$ is an $\epsilon$-accurate plot of $R'$ on $P'$.
\end{theorem}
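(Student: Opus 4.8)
The plan is to unwind the statement into the three defining conditions of an $\epsilon$-accurate plot (Definition~\ref{def:accurate_plot}) for the pair $(A',R')$ on $P'$, and to verify each one by transporting information between $\Lambda$ and the factors $P_i$. Throughout I would lean on three structural facts. First, by the formula \eqref{eqn:GAM} over the Boolean semiring, $A'(e')=1$ if and only if there is a link-entry $e\in\Entr(\Lambda)$ with $\Entr'_\Phi(e)=e'$ such that $A_i(\Entr^i_\Phi(e))=1$ for every $i$. Second, the injection $\BBox(\Lambda)\hookrightarrow\prod_i B_i$ of \eqref{eqn:injectivity} identifies $\BBox(\Lambda)$ with the \emph{compatible} tuples $(b_1,\dots,b_n)$, those agreeing on shared links; the projections $\BBox^i_\Phi$ and $\BBox'_\Phi$ carry the pixel $\Pixel(e)$ onto $\Pixel(e_i)$ and $\Pixel(e')$, where $e_i\coloneqq\Entr^i_\Phi(e)$, and these pixel projections are surjective, so a point of a factor pixel always lifts to a point of the $\Lambda$-pixel. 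Third, because $d$ is the $l_\infty$-metric and $S'=\prod_i S_i$, for compatible $x=(x_1,\dots,x_n)\in E'$ and $y'=(y_1,\dots,y_n)\in S'$ we have $d(f'(x),y')=\max_i d(f_i(x_i),y_i)$.

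For the first condition, suppose the $\Pixel(e')$-error of $R'$ achieves $0$, witnessed by $x\in E'$ with $\pi'(x)\in\Pixel(e')$ and $f'(x)\in S'$. Its image $b\in\BBox(\Lambda)$ lies in a unique pixel $\Pixel(e)$, and since $\BBox'_\Phi(b)=\pi'(x)\in\Pixel(e')$ we get $\Entr'_\Phi(e)=e'$. For each $i$ the component $x_i$ satisfies $\pi_i(x_i)=\BBox^i_\Phi(b)\in\Pixel(e_i)$ and $f_i(x_i)\in S_i$, so the $\Pixel(e_i)$-error of $R_i$ achieves $0$; the first condition for the $\epsilon$-accurate $A_i$ then forces $A_i(e_i)=1$ for all $i$, whence $A'(e')=1$. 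This is exactly the ``no false negatives'' (monotonicity) direction.

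For the third condition, suppose $A'(e')=1$ and fix the link-entry $e$ with all $A_i(e_i)=1$. Given any $b'\in\Pixel(e')$, lift it through the surjective pixel projection to some $b\in\Pixel(e)\ss\BBox(\Lambda)$ with $\BBox'_\Phi(b)=b'$, and set $b_i\coloneqq\BBox^i_\Phi(b)\in\Pixel(e_i)$. The third condition for each $A_i$ supplies $x_i\in E_i$ with $\pi_i(x_i)=b_i$ and some $y_i\in S_i$ with $d(f_i(x_i),y_i)\le 2\epsilon$. Because the $b_i$ all descend from the single point $b\in\BBox(\Lambda)$ they are automatically compatible, so $x\coloneqq(x_1,\dots,x_n)\in E'$ with $\pi'(x)=b'$, and the max-formula gives $d(f'(x),y')\le 2\epsilon$. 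Hence the $\Pixel(e')$-error of $R'$ is bounded by $2\epsilon$. The point to get right here is that the witnesses must be produced at a \emph{common} lifted point $b$ so that they glue, after which $l_\infty$ lets the worst coordinate absorb the bound.

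The main obstacle is the second condition: if the $\Pixel(e')$-error of $R'$ is always above $\epsilon$, I must show $A'(e')=0$. Arguing contrapositively, $A'(e')=1$ yields a link-entry $e$ with all $A_i(e_i)=1$, and the second condition for each $A_i$ produces, in each $\Pixel(e_i)$, a point at which $R_i$ has error at most $\epsilon$. The difficulty is that these points are chosen \emph{independently} in the different factors and need not agree on the shared links, so they do not assemble into a single compatible element of $E'$ with which to contradict ``always above $\epsilon$''; and the naive repair via the common-lift argument of the third condition only controls each factor by $2\epsilon$, not $\epsilon$. Thus the crux is to exhibit one compatible point of $\Pixel(e)$ at which \emph{every} factor is simultaneously within tolerance $\epsilon$. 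Reconciling the per-factor choices across shared links — so that a single value of each shared link makes all relations simultaneously near-solved, rather than each relation being solvable at its own value — is exactly where the work concentrates, and I expect this gluing step, rather than any of the routine pixel-projection bookkeeping, to be the heart of the proof.
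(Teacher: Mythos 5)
Your handling of the first and third conditions of Definition~\ref{def:accurate_plot} is correct and is essentially the paper's own argument: for the first, the paper likewise produces the entries $e_i$ from the witness $x\in E'$ (your step of locating the unique $\Lambda$-pixel containing the image of $x$, so that the $e_i$ are compatible and sit over $e'$, is stated more carefully than in the paper); for the third, the paper likewise pushes the per-factor bound $2\epsilon$ through the product via the $l_\infty$-metric and projects down to $\Pixel(e')$, your common-lift argument being the precise version of this.

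The second condition, which you leave open, is a genuine gap in your proposal---but your diagnosis is exactly right, and in fact the gap cannot be closed. The paper's proof disposes of this case in one sentence: assuming the $\Pixel(e')$-error of $R'$ is always above $\epsilon$, it asserts that ``by definition of the $l_\infty$-metric, there is some $i$ such that the $\Pixel(e_i)$-error is always above $\epsilon$.'' That is precisely the illegal quantifier exchange you were worried about: from ``at every compatible point some factor has error above $\epsilon$'' one cannot conclude ``some single factor has error above $\epsilon$ at every point.'' Indeed the claimed condition is false in general. Take a single shared, exposed variable $t$ (one port on each of $P_1$, $P_2$, $P'$, all on one link), a pixel equal to $[0,1)$, $\epsilon=1$, and the equations $f_1(t)=0$, $f_2(t)=0$ where $f_1(t)=0.95+t$, $f_2(t)=1.9-t$, so $S_1=S_2=\{0\}$. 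On this pixel the error set of $R_1$ is $[0.95,1.95)$ and that of $R_2$ is $(0.9,1.9]$: neither achieves $0$, neither is always above $\epsilon$, and both are bounded by $2\epsilon$, so plots with $A_1(e)=A_2(e)=1$ are $\epsilon$-accurate. Yet the joint error $\max(0.95+t,\,1.9-t)$ is at least $1.425>\epsilon$ everywhere on the pixel, so the second condition demands $A'(e')=0$, while \eqref{eqn:GAM} gives $A'(e')=A_1(e)\cdot A_2(e)=1$: the individual near-solutions occur at different values of the shared variable and do not glue. What your two completed steps prove is still the right salvage: $A'$ satisfies conditions one and three with the same $\epsilon$, and since ``bounded by $2\epsilon$'' precludes ``always above $2\epsilon$'' on a nonempty pixel, it follows that $A'$ is a $2\epsilon$-accurate plot of $R'$. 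That weakened statement---accuracy degrading by a factor of two per composition---is the correct form of the theorem, and your write-up, unlike the paper's, makes visible exactly why the stronger form fails.
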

\begin{proof}
    Suppose each $A_i$ is an $\epsilon$-accurate plot of $R_i$ on $P_i$. Now consider any entry $e' \in \Entr(P')$; we will show that the three conditions of Definition~\ref{def:accurate_plot} are satisfied. These will all be in terms of the $\Pixel(e')$-error set
    \[D_{f'}\big(\Pixel(e'),S'\big)\coloneqq\left\{d\big(f'(x),y\big)\in\Real_{\geq 0}\mid \pi'(x)\in\Pixel(e'), y\in S'\right\}.\]
    from Definition~\ref{def:plot_error}, where $f'$, $S'$, and $\pi'$ are as in the paragraphs above.

    If the $\Pixel(e')$-error of $R'$ achieves $0$, then there is some $x=(x_1,\ldots,x_n) \in E'$ such that $\pi'(x)\in\Pixel(e')$ and $f'(x) \in S'$. Let $i\in\{1,\ldots,n\}$; by definition of $E'$ there is a unique entry $e_i\in\Entr(P_i)$ such that $\pi_i(x_i)\in\Pixel(e_i)$. By definition of $S'$, we have that $f_i(x_i) \in S_i$, so  the $\Pixel(e_i)$-error of $R_i$ achieves $0$, and thus by assumption $A_i(e_i) = 1$. Their product $\prod_i A_i=1$ is a summand of Equation~\eqref{eqn:GAM}, so $A'(e') = 1$.

    Next, suppose the $\Pixel(e')$-error is always above $\epsilon$. For any $x\in E'$ such that $\pi'(x)=e'$, let $e_1,\ldots,e_n$ be as above. Again by definition of the $l_\infty$-metric, there is some $i$ such that the $\Pixel(e_i)$-error is always above $\epsilon$, so by assumption $A_i(e_i) = 0$. This contributes a $0$ to every product term $\prod_{i}A_i(e_i)$ of \eqref{eqn:GAM}, so their sum is again $A'(e') = 0$.


    Finally, if $A'(e') = 1$ then by \eqref{eqn:GAM} there is an $e\in\Entr(\Lambda)$ such that $\Entr'_\Phi(e)=e'$ and $A_i(e_i) = 1$ for all $i$, where $e_i=\Entr_\Phi^i(e)$. By assumption, the $\Pixel(e_i)$-error of $R_i$ is bounded by $2\epsilon$ so, according to the $l_\infty$-metric, the $\prod_i \Pixel(e_i)$-error of the product relation also is bounded by $2\epsilon$. But $\Pixel(e')$ is just a projection of this product, so the $\Pixel(e')$-error of $R'$ also is bounded by $2\epsilon$.
\end{proof}

Note that Corollary~\ref{cor:guarantees} is the base case of the property in Theorem~\ref{thm:extended}, where there is one function and target set, and no unexposed variables. Thus, Theorem~\ref{thm:extended} shows that the array multiplication guarantees, even through arbitrary levels of nesting, a solution plot to a system of relations (represented here as a product) that has the same accuracy guarantees as the plots of atomic relations. These initial plots can be generated in a straightforward way as shown in Definition~\ref{def:sample_in_center}.

\section{Clustering to minimize the cost polynomial}\label{sec:clustering}

One can readily see that the cost---i.e.\ computational complexity---of naively performing the generalized array multiplication algorithm~\ref{alg:AMF} on a wiring diagram $\Phi$ with links $\Lambda=\{1,\ldots,n\}$, having resolutions $r_1,\ldots,r_n\geq 2$, will be the product $r_1*\cdots*r_n$, since the algorithm iterates through all entries $e\in\Entr(\Lambda)$.%
\footnote{Throughout this section, we refer to the \emph{naive cost} of matrix/array multiplication. Improving this using modern high-performance methods, or by taking advantage of sparsity, is left for future work.}
For any set $L$ and function $r\colon L\to\NN_{\geq2}$, we denote the product by
\begin{equation}\label{eqn:naive_cost}
	r^L\coloneqq\prod_{\ell\in L}r(\ell).
\end{equation}
It is often convenient to assume that the resolution is \emph{constant}, meaning that there is a fixed $r_0\geq 2$ such that $r(\ell)=r_0$ for all $\ell\in L$. In this case our notation agrees with the usual arithmetic notation, $r^L=r_0^{\#L}$. Equation \eqref{eqn:naive_cost} gives an upper bound for the computational complexity, so we already see that the complexity of the array multiplication formula for $\Phi$ is at most polynomial in $r$.

In fact, we can reduce the degree of this polynomial by \emph{clustering} the diagram. The savings is related to the number (and resolution) of links that are properly contained inside clusters, i.e.\ that represent unexposed variables. Clustering in this way returns the correct answer exactly because the generalized array multiplication formula is associative in the sense of Theorem~\ref{thm:associative}. For example, to multiply three $n\times n$-matrices $MNP$, the unclustered cost would be $n^4$, whereas the clustered cost---obtained by using the associative law---is $2n^3$. We now discuss what we mean by clustering for a general wiring diagram.

\begin{definition}\label{def:efficient_factorization}
Let $\Phi\colon P_1,\ldots,P_n\to P'$ be a wiring diagram. A \emph{cluster} is a choice of subset $C\ss\{1,\ldots,n\}$; we may assume by symmetry that $C=\{1,\ldots,m\}$ for some $m\leq n$. 

Let $\varphi\colon P_1\sqcup\cdots\sqcup P_n\sqcup P'\to \Lambda$ be the partition as in Definition~\ref{def:wd}. Consider the images
\[
\Lambda'_C\coloneqq\varphi(P_1\sqcup\cdots\sqcup P_m)
\qquad\text{and}\qquad
\Lambda''_C\coloneqq\varphi(P_{m+1}\sqcup\cdots\sqcup P_n\sqcup P')
\]
which we call the sets of \emph{$C$-interior links} and \emph{$C$-exterior links}, respectivly. Let $Q_C=\Lambda'_C\cap \Lambda''_C$ be their intersection; we call $Q_C$ the \emph{$C$-intermediate pack}. Define 
\[
\Phi'_C\colon P_1,\ldots,P_m\to Q_C
\qquad\text{and}\qquad
\Phi''_C\colon Q_C,P_{m+1},\ldots,P_n\to P'
\]
to be the evident restrictions of $\varphi$ with links $\Lambda'_C$ and $\Lambda''_C$, respectively. We call $\Phi'_C$ the \emph{interior diagram} and $\Phi''_C$ the \emph{exterior diagram}, and refer to the pair $(\Phi'_C,\Phi''_C)$ as the \emph{$C$-factorization of $\Phi$}.%
\footnote{
It is easy to show that, in the language of operads, $\Phi=\Phi''\circ_C\Phi'$ is indeed a factorization; see \cite{Leinster}.
}
We may drop the $C$'s when they are clear from context.

\end{definition}

Clustering is worthwhile if it separates internal from external variables in a sense formalized by the following definition and proposition.

\begin{definition}\label{def:properly_internal_external}
With notation as in Definition~\ref{def:efficient_factorization}, we say that $C$ is a \emph{trivial cluster} if either $\Lambda'_C=Q_C$ or $\Lambda''_C=Q_C$. We refer to  $L'\coloneqq \Lambda'_C-Q_C$ (resp.\ $L''=\Lambda''_C-Q_C$) as the set of \emph{properly internal \textnormal{(resp. \emph{properly external})} links} in the $C$-factorization. Thus $C$ is trivial iff either $L'=\emptyset$ or $L''=\emptyset$; otherwise we say that $C$ is \emph{a nontrivial cluster}.
\end{definition}

\begin{proposition}\label{prop:speedup}
Let $\Phi\colon P_1,\ldots,P_n\to P'$ be a wiring diagram, and let $C\ss\ord{n}$ be a nontrivial cluster. Let $Q$ be its intermediate pack, and let $L',L''\neq\emptyset$ be the sets of properly internal and external links in the $C$-factorization (Definition~\ref{def:properly_internal_external}). Then clustering at $C$ is always "efficient", in the sense that the cost difference is nonnegative:
\begin{equation}\label{eqn:difference}
r^{Q}\Big(r^{L'+L''}-\big(r^{L'}+r^{L''}\big)\Big)\geq 0.
\end{equation}
\end{proposition}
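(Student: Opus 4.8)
The plan is to reduce the inequality \eqref{eqn:difference} to an elementary estimate by first unwinding the product notation and then invoking the nontriviality hypothesis. First I would observe that, by Definition~\ref{def:properly_internal_external}, the sets $L'=\Lambda'_C-Q$ and $L''=\Lambda''_C-Q$ are disjoint: one consists of links lying in $\Lambda'_C$ but not $\Lambda''_C$, the other of links lying in $\Lambda''_C$ but not $\Lambda'_C$. Hence the ``$+$'' appearing in the exponent of $r^{L'+L''}$ denotes a disjoint union of index sets, and since $r^{L}$ is by definition the product $\prod_{\ell\in L}r(\ell)$ of \eqref{eqn:naive_cost}, it factors as $r^{L'+L''}=r^{L'}\cdot r^{L''}$.

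Next I would factor out $r^{Q}$. Since every resolution satisfies $r(\ell)\geq 2$, the quantity $r^{Q}$ is a product of integers $\geq 2$ (or the empty product, $1$), hence $r^{Q}\geq 1>0$. The sign of the left-hand side of \eqref{eqn:difference} is therefore governed entirely by the bracketed factor, so it suffices to prove
\[
r^{L'}\cdot r^{L''}\;\geq\; r^{L'}+r^{L''}.
\]

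The remaining step is the genuinely load-bearing use of the hypothesis that $C$ is \emph{nontrivial}, which by Definition~\ref{def:properly_internal_external} means precisely that $L'\neq\emptyset$ and $L''\neq\emptyset$. Writing $a\coloneqq r^{L'}$ and $b\coloneqq r^{L''}$, each is a \emph{nonempty} product of factors that are all $\geq 2$, whence $a\geq 2$ and $b\geq 2$. The inequality $ab\geq a+b$ then follows from the rearrangement $ab-a-b=(a-1)(b-1)-1$, since $(a-1)(b-1)\geq(2-1)(2-1)=1$. Multiplying through by the nonnegative factor $r^{Q}$ recovers \eqref{eqn:difference}.

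I do not expect a serious obstacle here; the entire content is the algebraic identity $ab-a-b=(a-1)(b-1)-1$ together with the observation that nontriviality is exactly what forces $a,b\geq 2$. The one point deserving care---and the reason the nontriviality hypothesis cannot be dropped---is that if either $L'$ or $L''$ were empty, the corresponding product would equal $1$ and the bracketed factor would drop to $-1<0$, so that clustering at a trivial $C$ would be a pessimization rather than a speedup. To justify calling \eqref{eqn:difference} a ``cost difference'' it is also worth recording the decomposition $\Lambda=L'\sqcup Q\sqcup L''$, under which $r^{Q}\,r^{L'+L''}=r^{\Lambda}$ is the unclustered cost while $r^{Q}\big(r^{L'}+r^{L''}\big)=r^{\Lambda'_C}+r^{\Lambda''_C}$ is the cost of performing the interior multiplication $\Phi'_C$ and then the exterior multiplication $\Phi''_C$ in sequence.
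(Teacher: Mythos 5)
Your proof is correct and follows essentially the same route as the paper's: nontriviality gives $r^{L'},r^{L''}\geq 2$, the (possibly empty) intermediate pack gives $r^{Q}\geq 1$, and the bracketed factor is nonnegative because $ab\geq a+b$ whenever $a,b\geq 2$. You simply make explicit two things the paper leaves implicit---the identity $ab-a-b=(a-1)(b-1)-1$ and the decomposition $\Lambda=L'\sqcup Q\sqcup L''$ justifying the ``cost difference'' interpretation---which is a fair expansion, not a different argument.
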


\begin{proof}
By assumption, and with notation as in \eqref{eqn:naive_cost}, we have $r^{L'},r^{L''}\geq 2$. Noting that the intermediate pack $P$ may be empty, we have $r^Q\geq 1$; thus the inequality holds, and it will be strict if any of these three quantities are not at their lower bound. The formula in \eqref{eqn:difference} is a simple matter of applying the comparing the cost of $\Phi$, given in \eqref{eqn:naive_cost}, to the sum of the costs for the clusters, $\Phi'$ and $\Phi''$. One uses that $\Lambda=Q+L+L''$, $\Lambda'=L'+Q$, and $\Lambda''=L''+Q$.
\end{proof} 

\subsection{Cluster trees}
Given a wiring diagram $\Phi\colon P_1,\ldots,P_n\to P'$, one may perform a sequence of clusterings, either in parallel or in series. If two clusterings can be done in parallel, we do not differentiate between the order in which they are performed. We will refer to the resulting dependency tree as the \emph{cluster tree} $T$; it is also called a \emph{non-binary, labeled, non-ranked dendogram} (see \cite[Section 7]{MURTAGH1984191}). Its leaves represent the inner packs of $\Phi$, its branches represent intermediate packs, and its nodes represent (possibly trivial) clusters. We label each node in $T$ by the cost \eqref{eqn:naive_cost} of the generalized array multiplication algorithm for the associated cluster. Let $\Clust(\Phi)$ denote the set of all cluster trees for $\Phi$. The costs in each node of a tree $T$ can be summed resulting in a polynomial $\Cost_T\in\NN[r]$%
\footnote{
Here we assume that the resolution $r\colon\Lambda\to\NN_{\geq 2}$ is constant, so $\NN[r]$ denotes the polynomial semiring in one variable with coefficients in $\NN$. Note that $\NN[r]$ has the structure of a linear order, where for example $r^2<r^2+r<2r^2<r^3$. In fact $\NN[r]$ is a well-order: every subset of elements has a minimum. It is easy to generalize to a non-constant resolution function $r$ by requiring a variable for each $\ell\in\Lambda$; however the linear ordering is lost in so doing. 
}
called the \emph{cost polynomial for $T$}, which represents the total serial runtime associated to this clustering. Given access to unlimited parallel computational resources, the cost would be given by a polynomial $\PCost_T$, which we call the \emph{parallel cost polynomial}. It is defined recursively, using max and sum: add the cost at a node to the max cost of its descendents \cite{jaja1992}.

There are many cluster trees $T$ for a wiring diagram $\Phi$, and each has its own (parallel) cost polynomial. Assuming constant resolution $r\geq2$, we may take the minimum over all (parallel) cost polynomials. We denote the result by 
\[
\Cost_\Phi\coloneqq\min_{T\in\Clust(\Phi)}\Cost_T
\qquad\text{and\ }\qquad
\PCost_\Phi\coloneqq\min_{T\in\Clust(\Phi)}\PCost_T
\]
and refer to it as the \emph{cluster polynomial} (resp.\ the \emph{parallel cluster polynomial}) for $\Phi$. 

\begin{example}
Here we show a wiring diagram (solid lines) together with a cluster tree $T$:
\begin{equation}
\begin{tikzpicture}[baseline=(outer.center)]
	\node[pack, scale = .75] at ( 0.0,  0.0) (D) {D};
	\node[pack, scale = .75] at ( 1.1,  0.0) (G) {G};
	\node[pack, scale = .75] at ( 2.4,  0.5) (E) {E};
	\node[pack, scale = .75] at ( 2.4, -0.5) (F) {F};
	\node[pack, scale = .75] at (-2.4,  0.0) (C) {C};
	\node[pack, scale = .75] at (-3.4,  0.5) (A) {A};
	\node[pack, scale = .75] at (-3.4, -0.5) (B) {B};
	
	\node[helper] at (-5.2,  0.5) (h1) {};
	\node[helper] at (-5.2, -0.5) (h2) {};
	\node[helper] at (-2.9,  0.0) (h3) {};
	\node[helper] at ( 5.0,  0.5) (h4) {};
	
	\draw (A) -- +({-1*cos(30)},{1*sin(30)});
	\draw (B) -- +({-1*cos(30)},{-1*sin(30)});
	\draw (h3) to (A);
	\draw (h3) to (B);
	\draw (h3) to (C);
	\draw (C) [bend right = 30] to (D);
	\draw (C) [bend left = 30] to (D);
	\draw (D) to (G);
	\draw (G) to (F);
	\draw (E) [bend right = 30] to (F);
	\draw (E) [bend left = 30] to (F);
	\draw (E) -- +(2,0);
	
	\node[invertwd, dashed, fit={(E) (F)}, scale = .21] {};
	\node[pack, dashed, fit = {(A) (B) (C)}, scale = .9] (p) {};
	\node[pack, dashed, fit = {(G) (E) (F)}, scale = .90] {};	
	\node[draw, ellipse, dashed, fit = {(G) (E) (F) (D)}, scale = 1.05] (q) {};
	\node[draw, ellipse, fit = {(p) (q)}, scale = .83] (outer) {};	
\end{tikzpicture}
\end{equation}
It could also be denoted $T=\{\{A,B,C\},\{\{\{E,F\},G\},D\}\}$ or drawn as follows:
\[
\begin{tikzpicture}[oriented WD,bb port sep=1, bb port length=2.5pt, bbx=.6cm, bb min width=.4cm, bby=.7ex]
	\node[bb={3}{1}] (abcz) {$r^5$};
	\node[bb={2}{1}, below=3 of abcz] (efy) {$r^4$};
	\node[bb={2}{1}, below right=-2 and 1 of efy] (ygx) {$r^3$};
	\node[bb={2}{1}, above right =1 and 1 of ygx] (ZDV) {$r^4$};
	\node[bb={2}{1}, above right =-2 and 1 of ZDV] (vxw) {$r^5$};
	\node[bb={7}{1}, fit={(abcz) (ZDV) (efy) (ygx) (vxw)}, dotted] (outer) {};
	\draw (outer_in1') to (abcz_in1);
	\draw (outer_in2') to (abcz_in2);
	\draw (outer_in3') to (abcz_in3);
	\draw let \p1=(abcz.west), \p2=(ZDV_in1), \n1=\bbportlen in
		(outer_in4') to (\x1-\n1,\y2) -- (ZDV_in1);
	\draw (outer_in5') to (efy_in1);
	\draw (outer_in6') to (efy_in2);
	\draw let \p1=(efy.west), \p2=(ygx_in2), \n1=\bbportlen in
		(outer_in7') to (\x1-\n1,\y2) -- (ygx_in2);
	\draw let \p1=(ZDV.east), \p2=(abcz_out1), \n1=\bbportlen in
		(abcz_out1) -- (\x1+\n1,\y2) to (vxw_in1);
	\draw (efy_out1) to (ygx_in1);
	\draw (ZDV_out1) to (vxw_in2);
	\draw (ygx_out1) to (ZDV_in2);
	\draw (vxw_out1) to (outer_out1');
	\draw[label] 
		node[left=2pt of outer_in1]{$A$}
		node[left=2pt of outer_in2]{$B$}
		node[left=2pt of outer_in3]{$C$}
		node[left=2pt of outer_in4]{$D$}
		node[left=2pt of outer_in5]{$E$}
		node[left=2pt of outer_in6]{$F$}
		node[left=2pt of outer_in7]{$G$}
	%
	;
\end{tikzpicture}
\]
The total cost (computational complexity) of array multiplication for this clustering is the sum $\Cost_T=2r^5+2r^4+r^3$. It is not hard to show that this is the minimal cost; i.e.\ that the cluster polynomial for $\Phi$ is $\Cost_\Phi=\Cost_T$. Similarly, $\PCost_\Phi=\PCost_T=2r^5$.
\end{example}

\subsection{The value of clustering}\label{sec: success of algos}
To determine the savings based on various clustering algorithms, we generated a number of random packs and wiring diagrams. We wrote a script that generated 1000 random wiring diagrams with $1\leq n\leq 10$ inner packs, and determined which of our clustering techniques was most efficient. We then recorded what fraction of the unclustered cost it achieved, and calculated the mean over all trials. The results are shown below; one can see that the efficiency of clustering is roughly exponential in the number of packs.

\[
\begin{tikzpicture}
\begin{semilogyaxis}[
title = \footnotesize Number of Packs vs. Proportion of Max Time,
xlabel = \footnotesize Number of Inner Packs,
ylabel = \footnotesize Clustering time / max time,
xmin = 1, xmax = 9,
width=2.5in
]
\addplot[black, mark=o] coordinates {
	(1, 1)
	(2, 1)
	(3, .0140707)
	(4, .00177485)
	(5, .000129325)
	(6, 4.49858e-6)
	(7, 3.02081e-7)
	(8, 5.11764e-8)
	(9, 1.96023e-9)
};
\end{semilogyaxis}
\end{tikzpicture}
\]

\section{Mathematical summary of pixel array method}
Recall from Section~\ref{sec:input} that the input to the pixel array method is a set of relations $R_1,\ldots,R_n$, a discretization---range and resolution---for each variable, and a choice of variables to expose. To each relation $R_i$ we associate a pack $P_i$ whose ports are the variables that occur in $R_i$. These are the inner packs of a wiring diagram $\Phi$ whose outer pack $P'$ is the set of exposed variables, and whose links represent shared variables; see Section~\ref{sec:Packs_and_WDs}.

A plot $A_i\in\Arr(P_i)$ for each relation $R_i$ is created using any sufficiently fast and accurate method. These plots are then combined using generalized array multiplication $\Arr(\Phi)$, as specified by the wiring diagram $\Phi$; see Section~\ref{sec:GAM}. The result is an array $A'\in\Arr(P')$, namely the plot of solutions of the whole system, in terms only of the exposed variables. Error bounds on the initial plots are preserved under this array multiplication process, as shown in Theorem~\ref{thm:extended}. By associativity, the array multiplication can be clustered without affecting the solution, thus speeding up the computation considerably; see Section~\ref{sec:clustering}. By monotonicity, array multiplication will not introduce false negatives, and the result is an approximate solution set to the whole system.

\chapter{Results}\label{ch:Results}
In this section, we discuss our attempt to benchmark the pixel array method. We also discuss potential future work.


We made several attempts to benchmark the efficiency of our implementation against other well-known nonlinear solvers. However, we ran into difficulty because the question we are asking is fundamentally different from the one solved by other methods. Namely, we are interested in finding all solutions in a given box rather than finding a single solution given an initial guess. Thus in order to make the comparison, we attempted to apply other methods to solve our problem. We compared the speed of the PA method with that of two other solvers, namely Julia's NLsolve (\url{https://github.com/EconForge/NLsolve.jl}) and Mathematica's NSolve (\url{https://reference.wolfram.com/language/ref/NSolve.html}), believing them to be roughly representative of the state-of-the-art.

To test against NLsolve, we wrote a script to generate a discretized array that pixelates the $n$-dimensional hypercube of all possible solutions to a given system. We then iterated over each pixel in that array, running NLsolve with the pixel's center point as the initial guess and with the pixel radius as the precision. If NLsolve returns a solution that lands anywhere in our bounding box, we turn on the corresponding pixel. While in some sense crude, this procedure simulates finding all solutions in our bounding box.

We timed the PA method against the above procedure using NLsolve for a few different systems. For example, we used the system presented in Example~\ref{ex: butterfly}, where each variable has a resolution of 50, and variables $x$ and $z$ are exposed. In this case, NLsolve did not terminate after running for over three hours on a Dell laptop, while the PA method took about 1.5 seconds,
a speedup of more than 7,200x. We did not do an exhaustive study, but in every case we tried, the PA method was much faster than NLsolve.

We attempted to solve the same system (Example~\ref{ex: butterfly}) using Mathematica's methods for solving systems of equations. The usual Solve method fails because our system probably has no closed-form solution. On systems with a closed-form solution, Solve is superior to our method; however, such systems are rare in practice. We tried using NSolve to find numerical solutions to this system, but it threw an exception, "This system cannot be solved with the methods available to NSolve." Seeing Mathematica's Solve and NSolve completely fail illustrates a major advantage of our method: since it solves systems simply by pixelating their graphs in a bounding box, the pixel array method is virtually unaffected by the form of the functions in the system.

\subsection{Future work}



The speed of the pixel array method could be improved with help from the matrix arithmetic community. It would be useful to have fast algorithms for general array multiplication, as described in Section~\ref{sec:GAM}. For example, given three (possibly sparse) arrays that share one or more dimensions, there must surely be faster techniques for multiplying them together than the naive algorithm we supply above. It also seems likely that a clustering algorithm for pointed hypergraphs---something like the one given in \cite{Klimmek96asimple}---could be useful. 

\chapter*{Acknowledgments}\label{ch: acknowledgments}

The authors thank Andreas Noack from the Julia computing group at MIT, who was very generous with his time when answering our questions about Julia. We also thank Patrick Schultz and Spencer Breiner for their helpful comments on a draft of this paper.

\appendix
\chapter{Sample code}\label{sec:sample_code}
Below is sample Julia code, which plots the equations from Figure~\ref{fig: motivating example} as matrices and multiplies them.
\scriptsize
\begin{verbatim}
immutable Dim
  lower::Float64
  upper::Float64
  resolution::Int
end

function make_mat(f::Function, xdim::Dim, ydim::Dim, tol::Float64)
  rows = xdim.resolution
  cols = ydim.resolution
  result = Matrix{Bool}(rows, cols)

  xstep = (xdim.upper - xdim.lower) / xdim.resolution
  ystep = (ydim.upper - ydim.lower) / ydim.resolution

  for i in 1 : rows
    for j in 1 : cols
      xval = xdim.lower + (i - 0.5)*xstep
      yval = ydim.lower + (j - 0.5)*ystep          
      result[i, j] = abs( f(xval, yval) ) < tol
    end
  end

  return result
end

dim = Dim(-1.2, 1.2, 50)

M1 = make_mat((x,y) -> y - x^2, dim, dim, .05)
M2 = make_mat((y,z) -> y + z^2 - 1, dim, dim, .05)

using UnicodePlots
spy(M1)
spy(M2)
spy(M1*M2)
\end{verbatim}
\normalsize

\printbibliography

\end{document}